\renewcommand{\le}{\leqslant}
\renewcommand{\leq}{\leqslant}
\renewcommand{\ge}{\geqslant}
\renewcommand{\geq}{\geqslant}
\newcommand{\sign}{\text{sgn}}
\newcommand{\E}{\mathsf{E}\,}
\newcommand{\var}{\mathsf{Var}\,}
\newcommand{\cov}{\mathsf{Cov}\,}
\newcommand{\indi}{\mathbb{1}}
\newcommand{\Z}{\mathbb{Z}}
\newcommand{\N}{\mathbb{N}}
\newcommand{\R}{\mathbb{R}}
\theoremstyle{plain}
\newtheorem{ass}{Assumption}
\newtheorem{example}{Example}
\newtheorem{rem}{Remark}
\newtheorem{prop}{Proposition}[section]
\newcommand{\dto}{\overset{\mathcal{D}}{\longrightarrow}}
\title{Fully Functional Weighted Testing for Abrupt and Gradual Location Changes in Functional Time Series}
\author{Claudia Kirch, Hedvika Ranošová,  Martin Wendler}
\date{\today}
\begin{document}
\maketitle
\begin{abstract}
Change point tests for abrupt changes in the mean of functional data, i.e., random elements in infinite-dimensional Hilbert spaces, are either based on dimension reduction techniques, e.g., based on principal components, or directly based on a functional CUSUM (cumulative sum) statistic. The former have often been criticized as not being fully functional and losing too much information. On the other hand, unlike the latter, they take the covariance structure of the data into account by weighting the CUSUM statistics obtained after dimension reduction with the inverse covariance matrix.  In this paper, as a middle ground between these two approaches, we propose an alternative statistic that includes the covariance structure with an offset parameter to produce a scale-invariant test procedure and to increase power when the change is not aligned with the first components. We obtain the asymptotic distribution under the null hypothesis for this new test statistic, allowing for time dependence of the data. Furthermore, we introduce versions of all three test statistics for gradual change situations, which have not been previously considered for functional data, and derive their limit distribution. Further results shed light on the asymptotic power behavior for all test statistics under various ground truths for the alternatives. \\

    \noindent\textbf{Keywords:} functional time series, change point detection, gradual change, fully functional approach, dimension reduction approach
\end{abstract}

\section{Introduction} \label{sec_introduction}
The statistical analysis of functional data, both parametrically and non-parametrically, has received considerable attention in the last decades, see \cite{ferraty2006nonparametric,horvath2012inference,kokoszka2017introduction} for some recent textbooks or \cite{gertheisssurvey24} for a recent survey article. In particular, functional data analysis provides a rigorous mathematical framework to analyze many relevant high-dimensional data sets, taking the complex dependencies both across time and across location  into account, that is often present in such data sets. This includes, e.g., the analysis of neuroscience (fMRI-)data sets \citep{aston2012evaluating,stoehr2021detecting}, yearly temperature curves \citep{berkes2009detecting}, ocean drifter data \citep{gnettner2024symmetrisation} or financial data sets \citep{KOKOSZKA2024104426}, to name but a few.

Change point analysis deals with the question, whether data observed over a time course remains stationary or whether its stochastic structure changes over time. If changes are not taken into account in the subsequent analysis, then this statistical analysis is often no longer reliable -- a phenomenon related to what is known as concept drift in the AI community and known to  lead to a degeneration of predictive power in AI systems \citep{ADAMS2023102175,hinder2024one}. Change point analysis has a long tradition in statistics dating back to the 1950s \citep{Page1}, but has received considerable attention in the last decades, see, e.g.\, \cite{cho2024data,aue2024state,horvath2024change}  for recent overviews.

Not surprisingly, there are also several works dealing with the detection of changes in functional data, e.g.\ \cite{harrisetal2022,NEURIPS2022_f0add74c,horvathetal26functional,bastian2025gradual,paparoditisetal26} to name a few recent papers. The text books \cite{horvath2012inference,horvath2024change} each contain a chapter on change point detection for functional data.

To set the stage, consider the following  \textit{signal plus noise} model 
\begin{align}
    \label{eq_model}
X_i(z) = \mu_i(z) + \varepsilon_i(z), \quad z \in \mathcal{Z}, \ i=1,\ldots,n,
\end{align}
where $\{\varepsilon_i(\cdot)\}$ is a centered non-degenerate stationary sequence with values in the space of $\mathcal{H}=L^2(\mathcal{Z})$ of square integrable function for a domain $\mathcal{Z}$ equipped with a $\sigma$-finite measure $\nu$. $\mathcal{H}=L^2(\mathcal{Z})$ is real separable Hilbert space with an inner product $\langle f,g\rangle=\int_{\mathcal{Z}}fg\,d\nu$ that induces the norm $\|f\|=\sqrt{\langle f,f\rangle}$. Additionally, we require $\E \|\varepsilon_1\|^{{k}} < \infty$ for some $k>2$.

We aim to test the null hypothesis of the stationarity of the mean
$H_0: \mu_1 = \ldots = \mu_n$
against a general alternative 
 \begin{equation}
         H_A: \mu_i = \mu + g \left( \frac in \right) \cdot \Delta \label{alt_grad}
      \end{equation}  
where $g:[0,1] \to \R$ is Riemann integrable, non-constant, and $g(0)=0$. All parameters $\mu, \Delta \in L^2(\mathcal{Z})$ and $g$ are taken as unknown, and the assumptions on $\varepsilon_i$ are summarized in Assumptions~\ref{ass_covop} and \ref{ass_limit_FF} below. 
Classical alternatives are given by the (abrupt) AMOC (at-most-one-change) alternative, e.g.\ considered in \cite{berkes2009detecting},  where $\mu_i$ can be rewritten as
     \begin{equation}
        \mu_i = \mu+  \indi [i >  \theta n  ] \cdot \Delta, \label{alt_amoc}
     \end{equation}
  for $0< \theta\le 1$,    or the \emph{epidemic} alternative, for example considered in \cite{aston2012detecting}, with $0<\theta_1\le \theta_2\le 1$,
      \begin{equation}
          \mu_i = \mu+  \indi [  \theta_1 n  < i \leq   \theta_2 n  ] \cdot \Delta. \label{alt_ep}
      \end{equation}
      In this paper, we will consider abrupt changes in addition to \emph{(delayed) gradual change alternatives} defined by 
      \begin{equation}
          \mu_i = \mu+ h\left(\frac{i }{n}-\theta\right) \cdot\Delta\label{alt_d_g}
      \end{equation}
      for some smooth function $h:\mathbb{R}\to\mathbb{R}$ with $h(x)=0$ for $x\le 0$, $0<\theta\le 1$ and (w.l.o.g.) $\inf \{x: h(x)\neq 0\}=0$.
 In all of the above examples $\theta$ respectively $\theta_j$, $j=1,2$, are called \emph{change points}.
      
      A different type of gradual change alternative for functional data was recently considered by \cite{bastian2025gradual} using the supremum norm.

\paragraph*{Contributions and Outline:}

In Section~\ref{ssec_change-point_model}, we propose a new test statistic for the AMOC situation that takes the underlying covariance structure of the data into account while still being fully functional. We motivate the statistic as a compromise between a previously proposed approach based on dimension reduction and previously proposed fully functional statistics. In Section~\ref{ssec_null_asymptotics} we derive the corresponding asymptotic null distribution, complemented by asymptotic power discussions for various alternatives beyond the correctly specified AMOC situation in Section~\ref{ssec_power_considerations}. In Section~\ref{sec_gradual_change}, we propose a broader class of statistics designed for gradual changes for each of the three types of test statistics -- dimension reduction, classical fully functional and the new weighted fully functional approach. Again, we derive the null asymptotics of the proposed statistics and consider their asymptotic power in general cases including misspecification of the type of change.  We derive all of our theoretical results not only for time-independent functional data but allow for time-dependence.

\section{Fully Functional Weighted Tests Designed for Abrupt Changes} \label{sec_abrupt_change}
All test statistics discussed in this section are constructed based on the AMOC alternative as detailed in \eqref{alt_amoc}. We first motivate a new test statistic that takes the covariance structure of the data into account, while still being fully functional. Then, we analyze its behavior both under the null hypothesis and general alternatives not restricted to the AMOC alternative.
Extensions to a model of the \textit{delayed gradual change} are presented in Section~\ref{sec_gradual_change}.

\subsection{Test Statistics} \label{ssec_change-point_model}
There are essentially two competing approaches, one by dimension reduction, the other fully functional, that have previously been proposed for testing for a change in the mean of functional data, which are reconciled by a new proposal consisting of a weighted functional approach.
\subsubsection{Dimension Reduction Approach}
\cite{berkes2009detecting} propose to use a dimension reduction technique to transform the functional data into a multivariate data set and then subsequently apply classical change point methods for multivariate data.
To elaborate, let $d \in \N$ and choose $d$ orthogonal functions $\{w_p \}_{p=1}^d$. The random functions $X_1,\ldots, X_n$ are then transformed into $d$-dimensional random vector $\boldsymbol{\eta}_1,\ldots,\boldsymbol{\eta}_n$ with $\boldsymbol{\eta}_i=(\langle X_i, w_1\rangle,\ldots, \langle X_i, w_p \rangle)'$. Due to the linear nature of projections, the transformed data also has a change in expectation of the same type, e.g.\ AMOC or gradual change, with the same change point. This allows us to use the CUSUM test statistic for multivariate data based on the centered partial sums $\mathbf{S}_n$ as
    \begin{equation}\label{eq_PC_quadratic_form}
    T^2_{n,PC}=T^2_{n,\operatorname{PC}}(w_1,\ldots,w_d)=    \max_{1 \leq k < n} \frac{1}{n}\, \mathbf{S}_n\left(\frac{k}{n}\right)' \Sigma^{-1} \,\mathbf{S}_n\left(\frac{k}{n}\right),\quad
        \mathbf{S}_n(t)=\sum_{j=1}^{\lfloor tn \rfloor} \left(\boldsymbol{\eta}_j - \boldsymbol{\bar{\eta}}_n\right) 
    \end{equation}
    where $\boldsymbol{\bar{\eta}}_n = \frac{1}{n} \sum_{i=1}^n \boldsymbol{\eta}_i$. The purpose of weighting the partial sum process with the inverse of an estimator of the long-run covariance $\Sigma^{-1}$ of $\{\boldsymbol{\eta}_i \}_{i \in \Z}$ is two-fold: First, it balances the signal-to-noise ratio in an ideal way. Secondly, it leads to a distribution-free limit under the null hypothesis.     Here, the change is detected if it is not orthogonal to the subspace generated by $\{w_p \}_{p=1}^d$.

     Following classical dimension reduction techniques, \cite{berkes2009detecting} propose to use functional principal components (fPCA) as a data-dependent choice for dimension reduction, that is, they chose the subspace that explains the most variation among all $d$-dimensional subspaces. Clearly, other dimension reduction techniques are also possible (see \cite{aston2012detecting}).   
   The principal components  are then estimated as the eigenfunctions of the sample covariance function 
    $$\hat{c}(t,s) = \frac{1}{n}\sum_{i=1}^n (X_i(t) - \bar{X}_n(t))(X_i(s) - \bar{X}_n(s)).$$
    In case of i.i.d.\ errors, the matrix $\Sigma$ corresponding to the PCA approach is diagonal, and its estimator includes the estimated first $d$ eigenvalues of $C_0$, which is the covariance operator of the noise term $\varepsilon_1(\cdot)$. In the presence of dependent errors, the components are no longer uncorrelated due to temporal dependence, and the diagonal matrix must be replaced by an estimator of the long-run covariance $\Sigma = \sum_{r=-\infty}^\infty \E \boldsymbol{\eta}_0\boldsymbol{\eta}_r'$ (\cite{aston2012detecting}). The diagonal structure of the variance is kept if we replace the short-run covariance operator with the long-run covariance operator in the dimension reduction procedure (\cite{torgovitski2015darling}). Additionally, \cite{torgovitski2015detecting} recalculates the first component so it is better aligned with the direction of the change.
 Under alternatives the estimated PCA-basis will no longer approximate the PCA-basis of the underlying errors, but will be contaminated by the presence of the change. However, the power of the procedure will benefit from this contamination in the following sense: Any change that is not orthogonal to the principal curves of the error process $\{\varepsilon_i(\cdot)\}$ (Theorem 4.1 (a) in \cite{aston2012detecting}) is detectable. Furthermore, if the change is large enough, the principal components procedure will detect the change even if it does not align with the uncontaminated principle components, as the subspace generated by the change becomes more prominent than the first principal curves (Theorem 4.1 (b) in \cite{aston2012detecting}).  
 
\subsubsection{Fully Functional Version} 
The previous approach has often been criticized as not being fully functional due to the dimension reduction involved. Furthermore, while large enough changes are guaranteed to be detectable, this is not true for small (fixed) changes that are orthogonal to the non-contaminated principal curves.
This motivates the following alternative approach, which is based on the CUSUM test statistic in the functional domain, disregarding the covariance operator, with the   test statistic  defined as 
    $$T_{n, \operatorname{FF}}=\max_{1\leq k < n} \frac{1}{\sqrt{n}}\left\|\sum_{i=1}^k X_i - \frac{k}{n} \sum_{i=1}^n X_i\right\|. $$ 
  Because the functional covariance operator is not invertible, this test statistic cannot be weighted by the inverse of the operator in the same way as the multivariate statistic is. In consequence, the limit distribution is no longer pivotal but depends on the associated covariance operator. In fact, 
starting with the $\mathcal{H}$-valued version of the functional central limit theorem for the error terms $\varepsilon_i$ towards an 
     $\mathcal{H}$-valued Brownian motion $\{W_{C_{\varepsilon}}(t),t \in [0,1]\}$ with an associated covariance operator $C_{\varepsilon}$ as explained in \cite{sharipov2016sequential}, the limit is easily derived; see Theorem~\ref{thm_fclt} in the appendix.
   
\subsubsection{Weighted functional approach}
The behavior of both approaches depends on the change function $\Delta$ and the rate of decay of the eigenvalues of the covariance operator. If the eigenvalues decay rapidly (e.g., exponentially) or the change is aligned with the first few principal curves of the covariance operator, the dimension reduction approach is preferable as it balances the signal to the noise of those components. However, the loss of information (disregarding everything orthogonal to the chosen subspace) is an undesirable property  of this approach. In particular, when the change is orthogonal to the (contaminated) first principal curves, the approach based on dimension reduction will fail to detect the change. 

To benefit from advantages of both approaches, we propose a \textit{weighted functional} test statistic that can be written similarly in the functional domain as the fully functional statistics but reflects the covariance structure of the data by including the empirical long-run covariance operator $\hat{C}$ and its first eigenvalue $\hat{\lambda}_1$: 
 \begin{align}
        T_{n,\operatorname{WF}}=T_{n,\operatorname{WF}}(\hat{C}) &= \max_{1 \leq k < n }\left\|(\hat{C}+\hat{\lambda}_1\operatorname{Id})^{-\frac 12} \frac{1}{\sqrt{n}} \sum_{i=1}^k (X_i - \bar{X}_n)\right\|.\label{eq_pen_test_max}
        \end{align} 
Here, $\operatorname{Id}$ denotes the identity operator, and for any positive definite, bounded, self-adjoint operator $A:\mathcal{H}\rightarrow \mathcal{H}$, the operator $A^{-1/2}$ is the unique positive definite solution of $A^{-1/2} A A^{-1/2}=\operatorname{Id}$. For further background on linear operators, see e.g. the book of  \cite{kreyszig1991intro}.

The structure of the test statistic mimics the penalization in ridge regression, and we set the regularization parameter to the first eigenvalue of the operator so that the test statistic is scale-invariant (that is, $X$ and $aX$, $a\in \mathbb{R}\setminus \{0\}$ arbitrary, produce the same test statistic and decision).  

\subsubsection{Comparison of the three test statistics}\label{section_comparison}
We may compare the aforementioned test statistics in their theoretical form, based on the true principle curves for illustrational purposes. Let  $\{v_p\}_{p\in\N}$ be the principle curves, i.e.\ the orthonormal basis of eigenfunctions of the true covariance function (in case of i.i.d.\ errors) respectively long-run covariance (for time-series errors), with PCA-scores ${\eta}_{i,p}= \langle X_i, {v}_p \rangle$, $i = 1,\ldots,n $, $p \in \N$, with variance $\lambda_p=\var(\eta_{1,p})$. Then, the test statistic corresponding to the dimension reduction approach with this fixed basis (with reduction to dimension $d$) is equal to
\begin{equation}
    {T}^2_{n,\operatorname{PC}}(v_1,\ldots,v_p) =\max_{k=1,\ldots,n-1} \sum_{p=1}^d \frac{1}{{\lambda}_p} \frac{1}{n} \left[\sum_{i=1}^k \left({\eta}_{i,p} - {\bar{\eta}}_{n,p}\right)\right]^2. \label{eq_pca_pcav}
\end{equation}
On the other hand, the fully functional version can be rewritten via the principal components as well through Parseval's identity as
\begin{equation}
     {T}^2_{n,\operatorname{FF}} =\max_{k=1,\ldots,n-1} \sum_{p=1}^\infty \frac{1}{n} \left[\sum_{i=1}^k \left({\eta}_{i,p} - {\bar{\eta}}_{n,p}\right)\right]^2. \label{eq_ff_pcav}
\end{equation}
While the right hand side is written in terms of PCA-scores, the value of the statistic is independent of the choice of basis. Here, we write it based on the basis provided by the principle curves to highlight the connection with the other two statistics.

Similarly, by Parseval's identity,  we can rewrite the proposed weighted version based on the true long-run covariance operator $C_{\varepsilon}$ (defined in Assumption \ref{ass_covop}  below) with corresponding first eigenvalue $\lambda_1$  in the following way
\begin{align}
        {T}^2_{n,\operatorname{WF}}(C_\varepsilon) &= \max_{1 \leq k < n }\left\|(C_\varepsilon+\lambda_1 \operatorname{Id})^{-\frac 12} \frac{1}{\sqrt{n}} \sum_{i=1}^k (X_i - \bar{X}_n)\right\|^2 \nonumber = \max_{1 \leq k < n } \sum_{p=1}^\infty \left\langle (C_\varepsilon+\lambda_1 \operatorname{Id})^{-\frac 12} \frac{1}{\sqrt{n}} \sum_{i=1}^k (X_i - \bar{X}_n), v_p\right\rangle^2 \nonumber\\
         &= \max_{1 \leq k < n } \sum_{p=1}^\infty \left\langle  \frac{1}{\sqrt{n}} \sum_{i=1}^k (X_i - \bar{X}_n), (C_\varepsilon+\lambda_1 \operatorname{Id})^{-\frac 12}v_p\right\rangle^2 \nonumber=  \max_{1 \leq k < n } \sum_{p=1}^\infty  \frac{1}{\lambda_p+\lambda_1 }\frac 1n\left\langle \sum_{i=1}^k (X_i - \bar{X}_n), v_p\right\rangle^2 \nonumber \\
        &= \max_{1 \leq k < n } \sum_{p=1}^\infty {\frac{1}{{\lambda}_p+{\lambda}_1}} \frac 1n \left(\sum_{i=1}^k \left({\eta}_{i,p} - {\bar{\eta}}_{n,p}\right)\right)^2 \label{eq_pen_pcatvar}.
    \end{align}
This later representation explains why the test statistic $T_{n,\operatorname{WF}}$ is indeed a compromise between $T_{n, \operatorname{PC}}$ and $T_{n, \operatorname{FF}}$. It reflects the weights of the different components (i.e., the noise present in each direction) but at the same time does not require a reduction to a lower dimensional space, which entails the risk of discarding part of the signal. The regularization $\lambda_1 \operatorname{Id}$ cannot be omitted since the covariance operator is not invertible. Whereas $C_\varepsilon^{\frac 12}$ can be written as $C_\varepsilon^{\frac 12} = \sum_{p=1}^\infty \sqrt{\lambda_p}v_p \otimes v_p$, the linear mapping $C_\varepsilon^{- \frac{1}{2}}$ is unbounded and not continuous at any point in its domain.

If only a finite number of eigenvalues are positive, the problem is, in its essence, finite, although possibly highly dimensional, so that such an offset could be avoided in principle. Otherwise, it cannot be avoided without a loss of  information.

\subsection{Null Asymptotics} \label{ssec_null_asymptotics}

Essentially, we can derive
the limit distribution of the weighted functional statistics from the same functional limit theorem that also gives the limit of the fully functional statistics as long as a consistent estimator of the covariance exists.

For $v,w \in L^2(\mathcal{Z})$, the tensor product is denoted by $v \otimes w (h)= \langle v , h \rangle w$ for any $ h \in L^2(\mathcal{Z})$.  Recall, that linear operators on $L^2(\mathcal{Z})$ with the operator norm $\|B\| = \sup_h \|Bh\|/\|h\| < \infty$ form a Banach space. This norm is equal to the highest eigenvalue of the operator. We make the following assumptions:
\begin{ass} \label{ass_covop}
    We assume that $\{\varepsilon_i\}_{i \in \Z}$ is a centered, strictly stationary sequence of random variables with values in $L^2(\mathcal{Z})$ with $\E \|\varepsilon_1\|^{k} < \infty$ for some $k>2$. Denote  $ C_r = \E[\varepsilon_0 \otimes \varepsilon_r]$ and assume that $\sum_{r \in \Z}\| C_r\|<\infty$. We assume that the long-run covariance operator $C_\varepsilon$ is not the zero operator, i.e.\ $C_\varepsilon:=\sum_{r \in \Z} C_r \neq 0$. 
 Denote the spectral decomposition by
   \begin{equation}
       C_\varepsilon = \sum_{p=1}^\infty \lambda_{p,\varepsilon} v_{p,\varepsilon} \otimes v_{p,\varepsilon} \label{eq_lr_covop}
   \end{equation}
   where $\{\lambda_{p,\varepsilon}\}_{p \in \N}$ is the non-decreasing sequence of eigenvalues and $\{v_{p,\varepsilon}\}_{p \in \N}$ is an orthonormal basis of principal curves of $L^2(\mathcal{Z})$. Furthermore, we assume that $\sum_{p=1}^\infty\sum_{h=-\infty}^\infty|\cov(\langle \varepsilon_0, v_{p,\varepsilon} \rangle,\langle \varepsilon_h, v_{p,\varepsilon} \rangle)|<\infty$.
   \end{ass}
\begin{ass}\label{ass_limit_FF}
Let $\{\varepsilon_i\}_{i \in \Z}$ satisfy 
    \begin{align}\label{eq_fclt_ff}\left\{\frac{1}{\sqrt{n}} \sum_{i=1}^{\lfloor tn \rfloor}\varepsilon_i , \ t \in [0,1] \right\} \stackrel{D^H[0,1]}{\Longrightarrow} \{W_{C_{\varepsilon}}(t),t \in [0,1]\},
    \end{align}
  where $\{W_{C_{\varepsilon}}(t),t \in [0,1]\}$ is a $\mathcal{H}$-valued Brownian motion with an associated  covariance operator $C_\varepsilon$ from Assumption~\ref{ass_covop}.  
\end{ass}
Theorem~\ref{thm_ass_FCLT} in Appendix~\ref{sec_ass_fclt} summarizes results of the validity of Assumption~\ref{ass_limit_FF} under various dependency concepts, including independence, $m$-decomposability, and the physical dependence measure.

 In order to derive the limit distribution of the weighted functional statistics (and for the PCA-based dimension reduced statistic), we need to guarantee that the covariance (and corresponding largest eigenvalue) can be estimated consistently.

 \begin{ass}\label{ass_covariance_estimation}
 Let $\hat C$ be a consistent estimator for $C_{\varepsilon}$ as in Assumption~\ref{ass_covop} in the sense of $\|\hat C - C_{\varepsilon}\|=o_P(1)$.       \end{ass}

\begin{thmrep} \label{thm_asymptotics_h0}
Under  $H_0$ and Assumptions~\ref{ass_covop}, \ref{ass_limit_FF}, and \ref{ass_covariance_estimation}, it holds as $n\to\infty$
$$T_{n,\operatorname{WF}}^2(\hat{C}_n)\dto \sup_{0 \leq t \leq 1 }  \sum_{p=1}^\infty {\frac{{\lambda}_{p,\varepsilon}}{{\lambda}_{p,\varepsilon}+{\lambda}_{1,\varepsilon}}} B^2_p(t)$$
where $B_p$ are standard i.i.d. Brownian bridges.
\end{thmrep}

  The limit distribution of the squared fully functional test statistics under the null hypothesis can similarly be expressed as $\sup_{0\le t\le 1}\sum_{p=1}^\infty \lambda_{p, \varepsilon} B^2_p(t)$ with the same notation. Note, however, that the fully functional statistic does not use an estimator $\hat C$, so that Assumption~\ref{ass_covariance_estimation} is not required.
Because $\sum_{p=1}^\infty \lambda_p < \infty$ we also have $\sum_{p=1}^\infty \frac{\lambda_p}{\lambda_p+\lambda_1} \le \frac{1}{\lambda_1}\sum_{p=1}^\infty \lambda_p < \infty$ and both sequences have the same rate of decay.

Clearly, the limit distribution both of the weighted functional and the fully functional statistic are non-pivotal. Thus,  critical values of the test procedures depend on the unknown $C_{\varepsilon}$ and have to be estimated for each instance.  For the fully functional approach, \cite{sharipov2016sequential} propose to use a sequential block bootstrap for dependent functional data, while \cite{aue2018detecting} use a Monte Carlo estimate of the limit distribution based on a kernel type estimator of the long-run covariance. Furthermore, in practice, the infinite sum is replaced by a finite sum as only a finite number of the eigenvalues can be estimated.

The null distribution of the statistics  $T_{n,PC}$ based on dimension reduction via principle components is pivotal and can be found in Theorem 1 of \cite{berkes2009detecting}, Theorem 5.1 in \cite{hormann2010weakly} or Theorem~2.2 of  \cite{horvath2014testing}.

\subsection{Power Considerations} \label{ssec_power_considerations}

We now focus on the behavior of the test statistic under fixed and local alternatives, where we allow for a very general set of alternatives - not limited to the at-most-one-change nor the delayed gradual change alternative.
We consider the following general shape of the time-varying mean in functional data:
\begin{equation}
           X_i= \mu +\Delta_n\, g\left(\frac in \right) + \varepsilon_i, \label{alt_grad_local}
\end{equation}
where the change function $g$ fulfills the following assumption:
\begin{ass} \label{ass_g_funkce}
  Let $g: [0,1] \to \R$ be a non-constant bounded Riemann integrable and piecewise continuous function such that $g(0)=0$, where piecewise continuity means that $[0,1]$ can be partitioned into a finite number of intervals of length $>0$, where $g$ is continuous.
\end{ass}

  Some examples of possible change functions are stated in the following Example~\ref{ex_change_g}.
 \begin{example} \label{ex_change_g}
     Assumption~\ref{ass_g_funkce} is in particular satisfied by the following  important change functions: 
     \begin{enumerate}[(a)]
         \item At most one change: $g_{\operatorname{AMOC},\theta^\ast}(x) = \indi[x>\theta^\ast]$ for some $\theta^\ast \in (0,1)$;
         \item Epidemic change: $g_{\operatorname{EP}}(x) = \indi[\theta_1^\ast < x \le \theta_2^\ast]$ for $0<\theta_1^\ast <\theta_2^\ast <1$;
                 \item Multiple  changes: $g_{\operatorname{MC}}(x)=\sum_{j=1}^{q}a_j\,\indi[\theta_j^\ast < x \le \theta_{j+1}^\ast]$ with $a_1+\ldots+a_q=1$, for some $0<\theta_1^{\ast}<\ldots<\theta_q^{\ast}<\theta_{q+1}^{\ast}=n$, where $q$ is the number of change points.
         \item Delayed gradual change: $g_{\operatorname{DG},\theta^{\ast}}(x)=h(x-\theta^\ast)$ for some smooth $h$ with $h(x)=0$ for $x\le 0$, see also Section~\ref{sec_gradual_change} for a statistic specifically designed for this alternative;
         \item constant linear constant: $g_{\operatorname{CLC}}=\min\{\max\{(x-\theta_1^\ast)/(\theta_2^\ast-\theta_1^\ast),0\},1\}$ for some $0<\theta_1^\ast <\theta_2^\ast <1$.
\end{enumerate}
 \end{example}

Assumption~\ref{ass_g_funkce} implies that there exists an $n_0$ such that for all $n > n_0$ at least one of the equalities in $g(0) = g(\frac 1n) = \cdots = g(\frac{n-1}{n}) = g(1)$ does not hold, such that $H_0$ is distinguishable from $H_1$.
More precisely, by the fundamental theorem of calculus for Riemann integrable functions, Assumption~\ref{ass_g_funkce} implies the existence of a $\theta^\ast$ such that
\begin{equation}
 |G_0(\theta^\star)|>0 \qquad \text{for}\qquad   G_0(\theta)=\int_0^{\theta} g(x) \ dx - \theta \int_0^1 g(x) \ dx.
    \label{theta_ast}
\end{equation}  
The $\theta^\star$ is not unique, e.g.\ for the AMOC example, this holds true for any $0<\theta<1$. In this example, the maximum in absolute value is obtained exactly at the change point, which is why the maximizer of any of the discussed statistics designed for the AMOC situation, is a good estimator for the change point.

Under alternatives, we do not require $\hat{C}_n$ to be consistent towards the covariance operator of the errors $C_{\varepsilon}$. Indeed, for a general alternative as in \eqref{alt_grad_local}, it will typically not be possible to get such an estimator, because it is in general not possible to reconstruct the noise sequence. However, in simple cases with known $g$, such as for the correctly specified AMOC situation this can be achieved, see, e.g., \cite[Section~3.1]{aue2018detecting}. 

Nevertheless, to obtain asymptotic power one of the test, it is sufficient that $\|\hat C_n\|=O_P(1)$  or, more generally, that $\|\hat C_n\|=o_P(n\|\Delta_n\|^2)$, i.e.\ it does not even have to be bounded in probability. For appropriate dependence conditions, the kernel variance estimator fulfills $\|\hat C_n\|=o_P(n\|\Delta_n\|^2)$, see  Lemma \ref{lemma_chat} in the appendix.

 \begin{thm} \label{thm_alternatives}
      Let $\|\hat C_n\|=o_P(n\|\Delta_n\|^2)$ and $\sqrt{n}\|\Delta_n\|\to \infty$, then 
    under Assumptions~\ref{ass_covop}, \ref{ass_limit_FF}, and \ref{ass_g_funkce} it holds $T_{n,WF}(\hat{C}) \stackrel{\mathsf{P}}{\to} \infty$ as $n \to \infty$.
\end{thm}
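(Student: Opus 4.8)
The plan is to show that the maximand in $T_{n,\operatorname{WF}}(\hat C)$ evaluated at a single well-chosen index $k$ already diverges, which suffices since the statistic is a maximum. The natural choice is $k = \lfloor \theta^\star n\rfloor$ for the $\theta^\star$ from \eqref{theta_ast}, i.e.\ the point at which the deterministic CUSUM of the signal part is bounded away from zero. The first step is to decompose the centered partial sum $\frac{1}{\sqrt n}\sum_{i=1}^{k}(X_i-\bar X_n)$ into a noise term and a signal term. The signal term is $\frac{1}{\sqrt n}\Delta_n\big(\sum_{i=1}^{k} g(i/n) - \frac kn\sum_{i=1}^n g(i/n)\big)$; by Riemann integrability of $g$ and the choice of $k$, the scalar factor is $\sqrt n\,(G_0(\theta^\star)+o(1))$, so this term has norm of order $\sqrt n\,\|\Delta_n\|\,|G_0(\theta^\star)|$, which tends to infinity by assumption. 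The noise term $\frac{1}{\sqrt n}\sum_{i=1}^{k}(\varepsilon_i-\bar\varepsilon_n)$ is $O_P(1)$ in $\mathcal H$ by Assumption~\ref{ass_limit_FF}.

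The second step is to control the effect of the operator $(\hat C+\hat\lambda_1\operatorname{Id})^{-1/2}$. Since $\hat C$ is self-adjoint and positive semidefinite, $(\hat C+\hat\lambda_1\operatorname{Id})^{-1/2}$ is a bounded operator with operator norm $(\text{smallest eigenvalue of }\hat C + \hat\lambda_1)^{-1/2} = \hat\lambda_1^{-1/2}$ when the spectrum of $\hat C$ includes values near $0$; more robustly, its norm is at most $\hat\lambda_1^{-1/2}$ always, and at least $(\|\hat C\|+\hat\lambda_1)^{-1/2} = (2\hat\lambda_1)^{-1/2}$ on the top eigenspace. Applying the reverse triangle inequality,
\[
T_{n,\operatorname{WF}}(\hat C) \ \ge\ \Big\|(\hat C+\hat\lambda_1\operatorname{Id})^{-1/2}\tfrac{1}{\sqrt n}\textstyle\sum_{i=1}^{k}(X_i-\bar X_n)\Big\|
\ \ge\ \big\|(\hat C+\hat\lambda_1\operatorname{Id})^{-1/2}S^{\mathrm{sig}}\big\| - \big\|(\hat C+\hat\lambda_1\operatorname{Id})^{-1/2}S^{\mathrm{noise}}\big\|,
\]
where $S^{\mathrm{sig}}$ and $S^{\mathrm{noise}}$ are the signal and noise parts above. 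The subtracted term is bounded by $\hat\lambda_1^{-1/2}\,O_P(1)$; since $\hat\lambda_1 \ge \hat\lambda_1^{(\min)}$ does not shrink below a positive constant in probability (because $\|\hat C\|\to\|C_\varepsilon\|>0$ or, under the weaker hypothesis, $\hat\lambda_1 = \|\hat C\|$ is at least of constant order — one needs here only that $\hat\lambda_1$ is bounded away from zero in probability, which follows from $\|C_\varepsilon\|>0$ together with $\|\hat C_n - C_\varepsilon\|=o_P(1)$ under the null-type estimator, and must be argued directly under the weaker moment condition), this subtracted term is $O_P(1)$.

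The main term is bounded below by noting that $(\hat C + \hat\lambda_1\operatorname{Id})^{-1/2}$ has operator norm bounded below on the whole space is false, so instead one projects onto the top eigendirection: let $\hat v_1$ be the leading eigenfunction of $\hat C$; then $\|(\hat C+\hat\lambda_1\operatorname{Id})^{-1/2}S^{\mathrm{sig}}\| \ge (2\hat\lambda_1)^{-1/2}|\langle S^{\mathrm{sig}},\hat v_1\rangle|$. The subtlety is that $S^{\mathrm{sig}}$ is proportional to $\Delta_n$, which could be nearly orthogonal to $\hat v_1$. The clean fix is to use $\|(\hat C+\hat\lambda_1\operatorname{Id})^{-1/2}h\| \ge (\|\hat C\| + \hat\lambda_1)^{-1/2}\|h\| = (\|\hat C\|+\hat\lambda_1)^{-1/2}\|h\|$ for all $h$, giving $\|(\hat C+\hat\lambda_1\operatorname{Id})^{-1/2}S^{\mathrm{sig}}\| \ge (\|\hat C\|+\hat\lambda_1)^{-1/2}\sqrt n\,\|\Delta_n\|\,|G_0(\theta^\star)|(1+o(1))$. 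Using $\hat\lambda_1 = \|\hat C\|$ and the hypothesis $\|\hat C_n\| = o_P(n\|\Delta_n\|^2)$, the prefactor satisfies $(\|\hat C\|+\hat\lambda_1)^{-1/2}\sqrt n\|\Delta_n\| = (2\|\hat C\|)^{-1/2}\sqrt n\|\Delta_n\| = \big(\tfrac{n\|\Delta_n\|^2}{2\|\hat C\|}\big)^{1/2} \xrightarrow{\mathsf P}\infty$. Hence the main term diverges, dominating the $O_P(1)$ noise term, and $T_{n,\operatorname{WF}}(\hat C)\xrightarrow{\mathsf P}\infty$.

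I expect the main obstacle to be the bookkeeping around $\hat\lambda_1 = \|\hat C\|$: one must handle both the lower bound on the signal contribution (for which the crude bound $(\|\hat C\|+\hat\lambda_1)^{-1/2}\|h\|$ conveniently avoids any alignment issue with $\Delta_n$) and the upper bound on the noise contribution (for which one needs $\hat\lambda_1$ bounded away from $0$ in probability — immediate if $\hat C$ is consistent, and otherwise requiring a short separate argument, e.g.\ that any reasonable estimator has $\|\hat C_n\|$ bounded below by a positive constant w.h.p., or simply stating it as covered by the quoted Lemma~\ref{lemma_chat} together with $\|C_\varepsilon\|>0$). Everything else — the Riemann-sum approximation of $G_0(\theta^\star)$ and the tightness of the noise partial sum — is routine given Assumptions~\ref{ass_limit_FF} and \ref{ass_g_funkce}.
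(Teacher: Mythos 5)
Your overall route coincides with the paper's: evaluate the maximum at the single index $k^\star=\lfloor n\theta^\star\rfloor$ with $\theta^\star$ from \eqref{theta_ast}, split the centered partial sum into a signal part of size $\sqrt n\,\|\Delta_n\|\,(|G_0(\theta^\star)|+o(1))$ (Riemann integrability of $g$) and a noise part that is $O_P(1)$ by Assumption~\ref{ass_limit_FF}, and control the weighting operator through $\|(\hat C+\hat\lambda_1\operatorname{Id})^{1/2}\|=(2\hat\lambda_1)^{1/2}=(2\|\hat C\|)^{1/2}$ combined with the hypothesis $\|\hat C_n\|=o_P(n\|\Delta_n\|^2)$.

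There is, however, one genuine gap in your bookkeeping: your bound for the noise term is $\hat\lambda_1^{-1/2}\,O_P(1)$, and to conclude it is $O_P(1)$ you require $\hat\lambda_1$ to stay bounded away from zero in probability, proposing to get this from consistency of $\hat C_n$. That is not available under the theorem's hypotheses: no consistency is assumed under the alternative (Assumption~\ref{ass_covariance_estimation} is deliberately not invoked, and the paper emphasizes that $\hat C_n$ need not even be bounded in probability), and Lemma~\ref{lemma_chat}(b) only provides the upper bound $\|\hat C_n\|=o_P(n\|\Delta_n\|^2)$, not a lower bound on $\hat\lambda_1$. The repair is a one-line rearrangement, and it is exactly what the paper does: apply $\|A^{-1}x\|\ge\|A\|^{-1}\|x\|$ to the \emph{whole} CUSUM at $k^\star$ before splitting, so that signal and noise receive the same prefactor; the lower bound then reads $(2\|\hat C\|)^{-1/2}\bigl(\sqrt n\,\|\Delta_n\|\,(|G_0(\theta^\star)|+o(1))-O_P(1)\bigr)$, which diverges in probability since $(2\|\hat C\|)^{-1/2}\sqrt n\,\|\Delta_n\|\to\infty$ by hypothesis. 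Equivalently, note that your own two bounds share the factor $\hat\lambda_1^{-1/2}$ up to the constant $\sqrt2$, so their difference equals $\hat\lambda_1^{-1/2}\bigl(2^{-1/2}\sqrt n\,\|\Delta_n\|\,(|G_0(\theta^\star)|+o(1))-O_P(1)\bigr)$ and no lower bound on $\hat\lambda_1$ is needed; with that observation your argument is complete and matches the paper's.
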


Consistency of the fully functional statistic and the dimension-reduced statistic can be obtained analogously, where the dimension-reduced statistic can only find changes, where $\Delta_n$ is not orthogonal to the first $d$ eigenfunctions.

\section{Gradual Change} \label{sec_gradual_change}

The AMOC test statistic discussed in Section~\ref{sec_abrupt_change} is consistent against general alternatives as stated in Theorem~\ref{thm_alternatives}.  However, as the statistic is designed with abrupt AMOC alternatives in mind, it may not result in the best power under gradual trends that can often be expected in natural phenomena.
Instead of an abrupt AMOC alternative such observations are better modeled with a gradually changing mean starting at some unknown point in time as in the delayed gradual change model given in \eqref{alt_d_g}, i.e.\ $\mu_i = \mu+ g_{\operatorname{DG},\theta^\ast}(i/n) \cdot\Delta$ as in Example~\ref{ex_change_g}.

For univariate data, \cite{huvskova2000limit,huvskova2002asymptotic} discuss the asymptotic behavior of statistics related to the log-likelihood ratio tests for the delayed gradual change model $g_{\operatorname{DG},\theta^\ast}(x)=h(x-\theta^{\ast})$ with known $h$ but unknown change point $\theta^{\ast}$. We extend these statistics to functional data, allowing for all three approaches as in the previous section. Since in general the true shape of the change function as indicated by $h$ will not be known exactly, we will again analyze the power behavior of these statistics under general alternatives.

\subsection{Test Statistics and Null Asymptotics} \label{ssec_test_statistic}
The test statistics in the previous section were all based on maxima of partial sum processes, which can also be written as weighted sums in the following way
\begin{align*}
\sum_{i=1}^k    (X_i-\bar{X}_n)=-\sum_{i=1}^n\indi[i>k]\, (X_i-\bar{X}_n)=- \sum_{i=1}^ng_{\operatorname{AMOC},\frac kn}\left(\frac i n \right) \, (X_i-\bar{X}_n),
\end{align*}
with $g_{\operatorname{AMOC},\frac kn}$ as in Example~\ref{ex_change_g}.
Similarly,  we propose now test statistics based on sums that are weighted with the change function for the delayed gradual change $g_{\operatorname{GC},k/n}(i/n)=h((i-k)/n)$ for a class of functions $h$ specified in Assumption~\ref{ass_function_h1}. In particular, this class includes the the functions $h(x)=x_+^\alpha$, $\alpha>0$, that were used in \cite{huvskova2000limit}.

To elaborate, the PCA-based dimension reduced statistics for gradual changes is given by
\begin{align}\label{eq_pca_pcav_grad}
    T^2_{n,\operatorname{PC}}(h;\hat{C})=\max_{1\le k< n }
     \sum_{p=1}^d \frac{1}{{\hat\lambda}_p} \frac{1}{n} \left[\sum_{i=1}^nh\left(\frac{i-k}{n}\right)\, \left(\hat{\eta}_{i,p} - {\bar{\hat \eta}}_{n,p}\right)\right]^2,
\end{align}
where $\hat \eta_{i,p}=\langle X_i,\hat{v}_p\rangle $ for the orthonormal eigenfunction $\hat{v}_p$ corresponding to the $p$-th largest eigenvalue $\hat{\lambda}_p$ of $\hat{C}$ are the principle component scores.
Similarly, the (unweighted) fully functional statistic for gradual changes is given by 
 $$T_{n, \operatorname{FF}}(h)=\max_{1\leq k < n}\left\| \frac{1}{\sqrt{n}}\sum_{i=1}^n h\left(\frac{i-k} n\right)\,  ( X_i - \bar{X}_n)\right\|. $$ 
while the weighted fully functional statistic is defined as
 $$T_{n, \operatorname{WF}}(h;\hat{C})=\max_{1\leq k < n} \left\|(\hat C+\hat{\lambda}_1 \operatorname{Id})^{-\frac 1 2}\,\frac{1}{\sqrt{n}}\sum_{i=1}^n h\left(\frac{i-k} n\right)\,  ( X_i - \bar{X}_n)\right\|, $$ 
 where $\hat{\lambda}_1$ is the largest eigenvalue of $\hat C$.

We make the following assumption on the function $h$ used in the above test statistics:
\begin{ass} \label{ass_function_h1}
      Let $h:[-1,1] \to \R$ be a non-constant function with  $h(x)=0$ for $x \leq 0$, which has bounded total variation and which is  $\alpha$-H\"older continuous on $[0,1]$ for some $\alpha>0$,  i.e.\ there exists $\beta>0$
     $$|h(t)-h(s)| \leq \beta |s-t|^\alpha, \quad s,t \in [0,1].$$
\end{ass}
This assumption holds e.g.  for the functions $h(x)=x_+^{\alpha}$, $\alpha>0$, which were proposed by \cite{huvskova2000limit}. 
Assumption~\ref{ass_function_h1} is weaker than the assumptions in \cite{huvskova2000limit, huvskova2002asymptotic}: In particular, because our proof works directly with the bounded total variation, we do no longer require $h$ to be non-decreasing nor absolutely continuous.

We first discuss the limit distribution of the fully functional and the weighted functional test statistic under the null hypothesis. As in Section~\ref{sec_abrupt_change} the corresponding limit distribution are not pivotal but do depend on the long-run covariance operator $C_{\varepsilon}$.

\begin{thm} \label{thm_gc_functional}
  Let Assumptions~\ref{ass_covop}, \ref{ass_limit_FF}, and \ref{ass_function_h1} be fulfilled. Let $G_p(t)=\int_0^{1-t} B_p(1-t-y) \ dh(y)$ with $\{B_p(t): t \in (0,1)\}$, $p=1,2,\ldots$, i.i.d.\ Brownian bridges.
  \begin{enumerate}[(a)]
\item Then, it holds under $H_0$ for the fully functional statistics
\begin{align}
T_{n,\operatorname{FF}}(h)\dto \sup_{0<t<1}\sqrt{\sum_{p=1}^\infty \lambda_{p,\varepsilon} G_p^2(t)} .\label{gc_functional_limit}
\end{align}
\item If additionally Assumption \ref{ass_covariance_estimation} holds, then it holds for the weighted functional statistics under $H_0$
\begin{align*}
T_{n,\operatorname{WF}}(h;\hat{C})\dto \sup_{0<t<1} \sqrt{\sum_{p=1}^\infty\frac{\lambda_{p,\varepsilon} }{\lambda_{1,\varepsilon} +\lambda_{p,\varepsilon} }G_p^2(t)}.
\end{align*}
  \end{enumerate}
\end{thm}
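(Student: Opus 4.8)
The plan is to reduce both statements to a single functional limit theorem for the process
\[
\mathbf{Z}_n(t) = \frac{1}{\sqrt n}\sum_{i=1}^n h\!\left(\frac{i-\lfloor tn\rfloor}{n}\right)(X_i-\bar X_n),\qquad t\in(0,1),
\]
viewed as a random element of $C([0,1],\mathcal{H})$ (or $D([0,1],\mathcal{H})$), and then pass the continuous functionals $z\mapsto\sup_t\|z(t)\|$ and $z\mapsto\sup_t\|(\hat C+\hat\lambda_1\operatorname{Id})^{-1/2}z(t)\|$ through a continuous mapping theorem. Under $H_0$ the $\mu_i$ cancel, so $\mathbf{Z}_n(t)$ depends only on the errors. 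The first step is to rewrite, using summation by parts (Abel summation) and the bounded total variation of $h$ from Assumption~\ref{ass_function_h1},
\[
\frac{1}{\sqrt n}\sum_{i=1}^n h\!\left(\tfrac{i-k}{n}\right)(\varepsilon_i-\bar\varepsilon_n)
= \int_0^{1-k/n} \Bigl(\tfrac{1}{\sqrt n}\textstyle\sum_{i:\,k/n< i/n\le k/n+y}(\varepsilon_i-\bar\varepsilon_n)\Bigr)\,dh(y) + (\text{boundary terms}),
\]
i.e.\ $\mathbf{Z}_n(t)$ is (up to negligible terms) a Riemann--Stieltjes integral of the CUSUM process $\widehat W_n(\cdot)=\frac{1}{\sqrt n}\sum_{i=1}^{\lfloor\cdot\,n\rfloor}\varepsilon_i - (\cdot)\bar\varepsilon_n$ against $dh$. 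By Assumption~\ref{ass_limit_FF} and the continuous mapping theorem, $\widehat W_n \Rightarrow W_{C_\varepsilon}(\cdot) - (\cdot)W_{C_\varepsilon}(1)$, which is exactly the $\mathcal{H}$-valued Brownian bridge with covariance $C_\varepsilon$. The integration map $w\mapsto\bigl(t\mapsto\int_0^{1-t}w(1-t-y)\,dh(y)\bigr)$ is continuous from $D^H[0,1]$ to $C([0,1],\mathcal{H})$ because $h$ has bounded variation (so the map is linear and bounded in sup-norm, with the modulus of continuity in $t$ controlled by the H\"older exponent $\alpha$); hence
\[
\mathbf{Z}_n(\cdot)\ \dto\ \mathbf{G}(\cdot),\qquad \mathbf{G}(t)=\int_0^{1-t} \bigl(W_{C_\varepsilon}(1-t-y)-(1-t-y)W_{C_\varepsilon}(1)\bigr)\,dh(y).
\]
Expanding $W_{C_\varepsilon}=\sum_p\sqrt{\lambda_{p,\varepsilon}}\,v_{p,\varepsilon}B_p$ with i.i.d.\ standard Brownian bridges $B_p$ and using orthonormality of the $v_{p,\varepsilon}$ gives $\|\mathbf{G}(t)\|^2=\sum_p\lambda_{p,\varepsilon}G_p^2(t)$ with $G_p$ as defined in the statement, establishing part~(a) after applying the continuous functional $\sup_{0<t<1}\|\cdot\|$ and taking square roots (noting $t\mapsto\|\mathbf{G}(t)\|$ has a.s.\ continuous, hence attained, supremum).

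For part~(b), I would first handle the operator factor. By Assumption~\ref{ass_covariance_estimation}, $\|\hat C-C_\varepsilon\|=o_P(1)$, and since $\hat\lambda_1$ is the top eigenvalue of $\hat C$ and $\lambda_{1,\varepsilon}$ that of $C_\varepsilon$, Weyl's inequality gives $|\hat\lambda_1-\lambda_{1,\varepsilon}|\le\|\hat C-C_\varepsilon\|=o_P(1)$. Because $C_\varepsilon+\lambda_{1,\varepsilon}\operatorname{Id}$ is bounded below by $\lambda_{1,\varepsilon}\operatorname{Id}>0$ (as $C_\varepsilon\ge0$ and $\lambda_{1,\varepsilon}>0$ by Assumption~\ref{ass_covop}), the map $A\mapsto A^{-1/2}$ is continuous (in operator norm) at $C_\varepsilon+\lambda_{1,\varepsilon}\operatorname{Id}$ on the set of self-adjoint operators bounded below by $\tfrac12\lambda_{1,\varepsilon}\operatorname{Id}$; hence $\|(\hat C+\hat\lambda_1\operatorname{Id})^{-1/2}-(C_\varepsilon+\lambda_{1,\varepsilon}\operatorname{Id})^{-1/2}\|=o_P(1)$. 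Combining this with $\sup_t\|\mathbf{Z}_n(t)\|=O_P(1)$ from part~(a), a Slutsky-type argument shows
\[
\sup_{0<t<1}\bigl\|(\hat C+\hat\lambda_1\operatorname{Id})^{-1/2}\mathbf{Z}_n(t)\bigr\|
= \sup_{0<t<1}\bigl\|(C_\varepsilon+\lambda_{1,\varepsilon}\operatorname{Id})^{-1/2}\mathbf{Z}_n(t)\bigr\| + o_P(1),
\]
and the right-hand side, by the continuous mapping theorem applied to the (bounded, linear, hence continuous) operator $(C_\varepsilon+\lambda_{1,\varepsilon}\operatorname{Id})^{-1/2}$ and the already-established weak convergence $\mathbf{Z}_n\Rightarrow\mathbf{G}$, converges to $\sup_t\|(C_\varepsilon+\lambda_{1,\varepsilon}\operatorname{Id})^{-1/2}\mathbf{G}(t)\|$. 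Finally $(C_\varepsilon+\lambda_{1,\varepsilon}\operatorname{Id})^{-1/2}v_{p,\varepsilon}=(\lambda_{p,\varepsilon}+\lambda_{1,\varepsilon})^{-1/2}v_{p,\varepsilon}$, so $\|(C_\varepsilon+\lambda_{1,\varepsilon}\operatorname{Id})^{-1/2}\mathbf{G}(t)\|^2=\sum_p\frac{\lambda_{p,\varepsilon}}{\lambda_{1,\varepsilon}+\lambda_{p,\varepsilon}}G_p^2(t)$, which is the claimed limit.

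I expect the main obstacle to be the rigorous justification of the summation-by-parts step and the continuity of the Riemann--Stieltjes integration map at the level of the path spaces: one must show that the discrete weighted sum is uniformly (in $t$) close to the continuous-time Stieltjes integral against $dh$, controlling both the boundary terms and the discretization error, and that the resulting integral functional is continuous as a map $D^H[0,1]\to C([0,1],\mathcal{H})$ so that the continuous mapping theorem applies to the limit $W_{C_\varepsilon}$ (whose paths are not of bounded variation, so the integral must genuinely be read as $\int w\,dh$ with $h$—not $w$—carrying the variation). The H\"older continuity of $h$ is what yields equicontinuity in $t$ of the limit paths and tightness of $\mathbf{Z}_n$; the bounded-variation assumption is what makes the integration operator bounded. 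A secondary, more routine point is verifying that the infinite sums $\sum_p\lambda_{p,\varepsilon}G_p^2(t)$ and $\sum_p\frac{\lambda_{p,\varepsilon}}{\lambda_{1,\varepsilon}+\lambda_{p,\varepsilon}}G_p^2(t)$ converge a.s.\ uniformly in $t$ and define continuous processes, which follows from $\sum_p\lambda_{p,\varepsilon}<\infty$ (Assumption~\ref{ass_covop}) together with $\sup_t\E\,G_p^2(t)\le C$ uniformly in $p$, exactly as in the remark following Theorem~\ref{thm_asymptotics_h0}.
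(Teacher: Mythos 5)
Your proposal is correct in substance, but it takes a genuinely different route from the paper. You argue directly at the level of the $\mathcal{H}$-valued CUSUM process: summation by parts turns the weighted sum into a Stieltjes integral against $dh$ (in fact, since $h$ is continuous under Assumption~\ref{ass_function_h1} and the CUSUM path is piecewise constant, the discrete sum at $t=k/n$ \emph{equals} the Lebesgue--Stieltjes integral of that path against $dh$, so the discretization error you list as the main obstacle reduces to grid alignment, controlled by $\operatorname{T\!V}(h)\max_{i\le n}\|\varepsilon_i-\bar\varepsilon_n\|/\sqrt{n}=o_P(1)$), after which Assumption~\ref{ass_limit_FF} and a continuous-mapping argument give (a), and a perturbation of $(\hat C+\hat\lambda_1\operatorname{Id})^{-1/2}$ gives (b). The paper instead works through finite-dimensional projections: a Skorokhod-representation invariance principle for the scores (Proposition~\ref{thm_wi_d_D}), a Hu\v{s}kov\'a-type approximation lemma (Lemma~\ref{thm_gc_d_D}) using the modulus of continuity of the Brownian bridge, and a truncation of the infinite series whose tail is controlled by a M\'oricz maximal inequality --- the step where the condition $\sum_{p}\sum_{h}|\cov(\langle\varepsilon_0,v_{p,\varepsilon}\rangle,\langle\varepsilon_h,v_{p,\varepsilon}\rangle)|<\infty$ of Assumption~\ref{ass_covop} is used; for (b) it uses operator concavity, see \eqref{eq_consistency_cov_square_roots}. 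Your route exploits the Hilbert-space tightness already contained in Assumption~\ref{ass_limit_FF}, so it avoids the truncation argument (and would not need that summability condition), and your continuity of $A\mapsto A^{-1/2}$ on operators bounded below is a standard alternative for (b); what the paper's route buys is the reusable Lemma~\ref{thm_gc_d_D}, which also drives Theorem~\ref{th_gradual_PCA} and the power results. Two points to tighten: the integration functional is not Skorokhod-continuous on all of $D^H[0,1]$, so you should invoke continuity at continuous paths (sufficient because $W_{C_\varepsilon}$ has continuous paths), and since your functional depends on $n$ through the grid $\lfloor nt\rfloor/n$ you need an extended continuous mapping argument or the explicit $o_P(1)$ grid bound above; finally, your bookkeeping produces a sign-flipped, time-reversed version of $\int_0^{1-t}B_p(1-t-y)\,dh(y)$, which has the same law, so the stated limit is indeed recovered.
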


\begin{rem}\label{rem_pca_gradual}
The (finite-dimensional) processes $\mathbf{G}=(G_1,\ldots,G_d)^T$ are centered Gaussian processes with covariance structure 
$$\E \mathbf{G}(s)\mathbf{G}(t)^T=I_d \left(\int_{s \vee t}^1 h(x-s)h(x-t) \ dx - \int_s^1h(x-s) \ dx \int_t^1 h(x-t) \ dx \right), \quad t,s \in [0,1],$$
see Remark 2.5 in \cite{huvskova2000limit}.
\end{rem}

 For the proof of the limit distribution of the dimension-reduced statistic based on PCA, we follow the approach based on the assumptions \eqref{eq_ass_est_grad}  taken by \cite{aston2012detecting}, see their Section~2.3 for conditions when \eqref{eq_ass_est_grad} holds, covering both mixing and physical dependence, as long as the first $d+1$ eigenvalues are distinct. The sign function $s_j$ is required as orthogonal eigenfunctions are only unique up to the sign.

\begin{thm}\label{th_gradual_PCA}
    Let Assumptions~\ref{ass_covop}, \ref{ass_limit_FF}, \ref{ass_covariance_estimation} and \ref{ass_function_h1} be fulfilled. Furthermore, let $\lambda_1>\ldots>\lambda_d>0$ and
    \begin{align}\label{eq_ass_est_grad}
    \|\hat{v}_p-s_p\,v_p\|=o_P(1),\qquad \hat{\lambda}_p-\lambda_p=o_P(1),
    \end{align}
    where $s_j=\sign(\int \hat{v}_p(t)v_p(t)\,dt)$, for $p=1,\ldots,d$. Then,  it holds, under $H_0$,
    \begin{align*}
        T_{n,\operatorname{PC}}(h;\hat{C})\dto \sup_{0\le t\le 1}\sqrt{\sum_{p=1}^dG_p^2(t)},
    \end{align*}
    where $G_p(t)=\int_0^{1-t} B_p(1-t-y) \ dh(y)$ with $\{B_p(t): t \in (0,1)\}$, $p=1,\ldots,d$, i.i.d.\ Brownian bridges is as in Theorem~\ref{thm_gc_functional}.
\end{thm}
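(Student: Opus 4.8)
The plan is to deduce the statement from Theorem~\ref{thm_gc_functional} together with the assumed consistency \eqref{eq_ass_est_grad} of the estimated eigenelements. Throughout I will write $t=k/n$, abbreviate $v_p=v_{p,\varepsilon}$ and $\lambda_p=\lambda_{p,\varepsilon}$ for $p=1,\dots,d$, and set $V_{n,p}(t)=\tfrac1{\sqrt n}\sum_{i=1}^{\lfloor tn\rfloor}\langle\varepsilon_i,v_p\rangle$. Under $H_0$ one has $X_i=\mu+\varepsilon_i$, and $\mu$ cancels under centering, so $\hat\eta_{i,p}-\bar{\hat\eta}_{n,p}=\langle\varepsilon_i-\bar\varepsilon_n,\hat v_p\rangle$ with $\bar\varepsilon_n=\tfrac1n\sum_{i=1}^n\varepsilon_i$.

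The first step is to replace $\hat v_p$ by $s_pv_p$ and $\hat\lambda_p$ by $\lambda_p$ inside the statistic. By the Cauchy--Schwarz inequality,
$$\sup_{1\le k<n}\Bigl|\tfrac1{\sqrt n}\sum_{i=1}^nh\bigl(\tfrac{i-k}n\bigr)\langle\varepsilon_i-\bar\varepsilon_n,\hat v_p-s_pv_p\rangle\Bigr|\le\|\hat v_p-s_pv_p\|\cdot\sup_{1\le k<n}\Bigl\|\tfrac1{\sqrt n}\sum_{i=1}^nh\bigl(\tfrac{i-k}n\bigr)(\varepsilon_i-\bar\varepsilon_n)\Bigr\|,$$
and the right-hand side is $o_P(1)$: the first factor by \eqref{eq_ass_est_grad}, the supremum because it is $O_P(1)$ — a by-product of the tightness argument underlying Theorem~\ref{thm_gc_functional}(a). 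Since $s_p^2=1$ the signs are irrelevant once the bracket is squared, and $\hat\lambda_p-\lambda_p=o_P(1)$ with $\lambda_p>0$, so by Slutsky's lemma it suffices to find the limit of $\sup_{1\le k<n}\sum_{p=1}^d\lambda_p^{-1}\bigl(M_n^{(p)}(k)\bigr)^2$, where $M_n^{(p)}(k)=\tfrac1{\sqrt n}\sum_{i=1}^nh\bigl(\tfrac{i-k}n\bigr)\langle\varepsilon_i-\bar\varepsilon_n,v_p\rangle$.

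The second step is a continuous-mapping representation. Applying the bounded (hence continuous) linear map $f\mapsto(\langle f,v_1\rangle,\dots,\langle f,v_d\rangle)$ to the functional CLT of Assumption~\ref{ass_limit_FF} and invoking the continuous mapping theorem yields, jointly in $p$, $(V_{n,1},\dots,V_{n,d})\dto(\sqrt{\lambda_1}W_1,\dots,\sqrt{\lambda_d}W_d)$ in $D^{\R^d}[0,1]$, with $W_1,\dots,W_d$ independent standard Brownian motions (the scaling and independence because the $v_p$ are orthonormal eigenfunctions of $C_\varepsilon$). A summation-by-parts identity — exact because $V_{n,p}$ is piecewise constant — combined with Riemann-sum approximations controlled by the bounded variation and $\alpha$-Hölder continuity of $h$, exactly as in the proof of Theorem~\ref{thm_gc_functional}, then gives uniformly in $1\le k<n$
$$M_n^{(p)}(k)=\int_0^{1-t}\bigl[(t+y)V_{n,p}(1)-V_{n,p}(t+y)\bigr]\,dh(y)+o_P(1)=:\Phi\bigl(V_{n,p}\bigr)(t)+o_P(1),$$
where one uses $h(0)=0$ and $\tfrac1n\sum_{j=1}^{n-k}h(j/n)\to\int_0^{1-t}(1-t-y)\,dh(y)$ uniformly in $k$. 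The functional $\Phi$ is Lipschitz on $(C[0,1],\|\cdot\|_\infty)$ with constant $2\operatorname{TV}(h)$ since $|t+y|\le1$, hence continuous, and the limit process is continuous so the Skorokhod topology causes no difficulty. Combining this with the first step through the continuous mapping theorem for $(\psi_1,\dots,\psi_d)\mapsto\bigl(\sup_{0\le t\le1}\sum_p\psi_p(t)^2\bigr)^{1/2}$ and Slutsky's lemma gives $T_{n,\operatorname{PC}}(h;\hat C)\dto\bigl(\sup_{0\le t\le1}\sum_{p=1}^d\lambda_p^{-1}\Phi(\sqrt{\lambda_p}W_p)(t)^2\bigr)^{1/2}$; and since $\Phi(\sqrt{\lambda_p}W_p)(t)=-\sqrt{\lambda_p}\int_0^{1-t}B_p(t+y)\,dh(y)$ with $B_p(s)=W_p(s)-sW_p(1)$ independent standard Brownian bridges, this equals $\bigl(\sup_{0\le t\le1}\sum_{p=1}^d(\int_0^{1-t}B_p(t+y)\,dh(y))^2\bigr)^{1/2}$.

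Finally I would identify this with the stated limit: since the time reversal $s\mapsto1-s$ maps a standard Brownian bridge to a standard Brownian bridge, $(B_1(1-\cdot),\dots,B_d(1-\cdot))\eqdist(B_1,\dots,B_d)$, hence $\int_0^{1-t}B_p(t+y)\,dh(y)\eqdist\int_0^{1-t}B_p(1-t-y)\,dh(y)=G_p(t)$ jointly in $(t,p)$; applying the continuous functional $\sup_t\sum_p(\cdot)^2$ and a square root identifies the limit with $\sup_{0\le t\le1}\sqrt{\sum_{p=1}^dG_p^2(t)}$. The hard part will be the second step — the uniform-in-$k$ identity $M_n^{(p)}=\Phi(V_{n,p})+o_P(1)$ and the continuity of $\Phi$, which is the technical core shared with Theorem~\ref{thm_gc_functional} — together with the genuinely new point in the first step, namely the interaction between the mere $\mathcal{H}$-norm consistency of $\hat v_p$ and the maximum over the $n-1$ candidate change points, which is handled by Cauchy--Schwarz and the tightness bound.
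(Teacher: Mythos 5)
Your proposal is correct, and its first step --- replacing $\hat v_p$ by $s_p v_p$ via Cauchy--Schwarz together with the $O_P(1)$ bound on $\max_{1\le k<n}\bigl\|n^{-1/2}\sum_{i=1}^n h((i-k)/n)(\varepsilon_i-\bar\varepsilon_n)\bigr\|$ supplied by Theorem~\ref{thm_gc_functional}(a), plus Slutsky for $\hat\lambda_p$ --- is exactly the paper's argument leading to \eqref{eq:replace_DR}. Where you genuinely diverge is the treatment of the projected scores: the paper invokes Proposition~\ref{thm_wi_d_D} (a Skorokhod-representation of the partial sums of the scores $\langle\varepsilon_i,v_p\rangle$ by a $d$-dimensional Wiener process with independent components) and then Lemma~\ref{thm_gc_d_D}, a Hu\v{s}kov\'a-type approximation of the $h$-weighted sums by the processes $\mathbf{G}_{\boldsymbol{\lambda},n}$, whose proof relies on the modulus of continuity of the Brownian bridge. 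You instead perform an exact Abel summation to write the weighted centred sum as $\Phi(V_{n,p})(k/n)+O_P(1/n)$ uniformly in $k$, with $\Phi(f)(t)=\int_0^{1-t}\bigl[(t+y)f(1)-f(t+y)\bigr]\,dh(y)$, observe that $\Phi$ (and hence the sup-of-sum-of-squares functional) is Lipschitz in the sup norm with constant of order $\mathrm{TV}(h)$, apply the continuous mapping theorem directly to the projected functional CLT, and identify the limit with the $G_p$'s by time reversal of the Brownian bridge; your formula for $\Phi$ and the reversal identity are correct. This is a genuinely different and arguably more elementary route for the technical core: it avoids the strong approximation and the Brownian-bridge modulus estimates altogether, since the Riemann--Stieltjes error is of order $n^{-1}\,\mathrm{TV}(h)\,|V_{n,p}(1)|$ once one notes that $V_{n,p}(t+y)$ is constant on the relevant grid cells and that $dh$ has no atoms ($h$ being H\"older continuous). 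The one point you gloss is that the statistic is a maximum over the grid $\{k/n\}$ while your CMT functional takes a supremum over all $t\in[0,1]$; you still need $\sup_{0\le t\le 1}\bigl|\Phi(V_{n,p})(t)-\Phi(V_{n,p})(\lfloor nt\rfloor/n)\bigr|=o_P(1)$, which follows easily from $\max_{i\le n}|\langle\varepsilon_i,v_p\rangle|=o_P(\sqrt n)$, the H\"older continuity of $h$ near the upper limit of integration, and the $O_P(n^{-1})$ drift term --- a step the paper's coupling in Lemma~\ref{thm_gc_d_D} delivers for free because its approximation is uniform in $t$.
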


\begin{rem}\label{rem2}
   The proof of Theorem~\ref{th_gradual_PCA} also works if  a different basis $v_p$ is being used and we set $\lambda_p=\var(\langle \varepsilon_1,v_p\rangle)$. In this case, the Wiener processes in Proposition \ref{thm_wi_d_D} are typically dependent and its correlation is inherited by the Brownian bridges in the limit of Theorem~\ref{th_gradual_PCA}. Consequently, the limit is no longer pivotal such that e.g.\ resampling methods need to be employed to obtain an asymptotically valid test. This is for example the case, when standard PCA based on the covariance function instead of the long-run covariance function is used for functional observations with time-dependency, see e.g.\ \cite{aston2012evaluating} for such an example  for the corresponding AMOC situation.
\end{rem}

\subsection{Power Behavior} \label{ssec_power_behavior}

Assume again the local gradual model \eqref{alt_grad_local} from Subsection~\ref{ssec_power_considerations}, where the true change function $g$ satisfies Assumption~\ref{ass_g_funkce}. 
We show, that a change is detectable if the signal term is bounded away from zero in an environment of some point $\theta$, where the signal term is given by
\begin{equation}\label{eq:detect}
 \sup_{0\leq t\leq1}\left|\int_0^1 h(x-t)\, g(x) \, dx - \int_0^1g(x) \,dx  \int_0^1 h(x-t)\, dx \right| 
 >0.
\end{equation} 
\begin{prop}\label{prop_alt_grad}
   If $g$ satisfies Assumption~\ref{ass_g_funkce} and $h$ satisfies Assumption~\ref{ass_function_h1} with $\inf \{x: h(x)\neq 0\}=0$, then \eqref{eq:detect} holds.
\end{prop}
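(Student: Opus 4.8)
The plan is to rewrite the quantity in \eqref{eq:detect} as the supremum modulus of a one-sided convolution and then invoke Titchmarsh's convolution theorem. Write $\bar g:=\int_0^1 g(x)\,dx$ and, for $t\in[0,1]$, set
\[
F(t):=\int_0^1 h(x-t)\bigl(g(x)-\bar g\bigr)\,dx=\int_t^1 h(x-t)\bigl(g(x)-\bar g\bigr)\,dx,
\]
the second equality using $h(u)=0$ for $u\le0$. Since the left-hand side of \eqref{eq:detect} is exactly $\sup_{0\le t\le1}|F(t)|$, it suffices to prove that $F$ does not vanish identically on $[0,1]$.

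First I would turn $F$ into a convolution: substituting $y=1-x$ and $s=1-t$ gives, for $s\in[0,1]$,
\[
F(1-s)=\int_0^s h(s-y)\,G(y)\,dy=(h*G)(s),\qquad G(y):=g(1-y)-\bar g,
\]
where $h$ and $G$ are viewed as elements of $L^1([0,\infty))$ after extension by $0$ outside $[0,1]$ (both are bounded, hence integrable). I would then record one consequence of the hypotheses on $h$: because $h$ is continuous on $[0,1]$ (Assumption~\ref{ass_function_h1} provides H\"older continuity there) and $\inf\{x:h(x)\neq0\}=0$, the function $h$ fails to be almost everywhere zero on $[0,\delta]$ for every $\delta>0$; that is, the essential infimum of the support of $h$ is $0$.

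Next I would argue by contradiction. Assume $F\equiv0$ on $[0,1]$, i.e.\ $(h*G)(s)=0$ for all $s\in[0,1]$. By Titchmarsh's convolution theorem there are $\lambda,\mu\ge0$ with $\lambda+\mu\ge1$ such that $h=0$ a.e.\ on $[0,\lambda]$ and $G=0$ a.e.\ on $[0,\mu]$. The observation above forces $\lambda=0$, hence $\mu\ge1$, so $G=0$ a.e.\ on $[0,1]$, i.e.\ $g=\bar g$ almost everywhere on $[0,1]$. Finally I would use piecewise continuity: $g$ is continuous on each of the finitely many subintervals of positive length that partition $[0,1]$, and a function continuous on an interval of positive length and a.e.\ equal to the constant $\bar g$ there equals $\bar g$ on all of it; hence $g\equiv\bar g$ on $[0,1]$, contradicting that $g$ is non-constant. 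This yields $F\not\equiv0$ and therefore \eqref{eq:detect}.

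The step I expect to be the crux is the convolution-support argument: although $h$ is of bounded variation and H\"older continuous, it may change sign infinitely often as $x\downarrow0$, so a naive ``leading order'' estimate of $(h*G)(s)$ as $s\downarrow0$ is not available, and Titchmarsh's convolution theorem is precisely what circumvents this. Should one wish to keep the argument self-contained, the alternative is to localize at the essential left endpoints of the supports of $h$ and of $g-\bar g$ and to run a Titchmarsh-style bootstrap by hand, which, however, just re-proves the nontrivial part of that theorem.
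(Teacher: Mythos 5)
Your proof is correct, and it takes a genuinely different route from the paper. The paper argues directly and constructively: it sets $f=g-\int_0^1 g$, looks at the rightmost point $C=\sup\{x: f(x)\neq 0\}$, and uses the assumptions to claim that $h$ keeps a fixed (nonzero) sign on some interval $(0,H)$ and $f$ keeps a fixed sign on some $(c,C)$; choosing $t^\star=\max(c,C-H)$ then makes $h(\cdot-t^\star)f$ single-signed and not a.e.\ zero on $(t^\star,C)$, so that particular $t^\star$ already witnesses \eqref{eq:detect}. Your argument instead recasts the detectability functional as the one-sided convolution $(h*G)(s)$ with $G(y)=g(1-y)-\bar g$ and invokes Titchmarsh's convolution theorem: vanishing of the convolution on $[0,1]$ plus the fact that the essential support of $h$ starts at $0$ (which you correctly derive from H\"older continuity on $[0,1]$ and $\inf\{x:h(x)\neq0\}=0$) forces $G=0$ a.e., and the passage from ``$g=\bar g$ a.e.'' to ``$g$ constant'' via piecewise continuity and density is sound, contradicting Assumption~\ref{ass_g_funkce}. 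The trade-off is as you anticipate: the paper's proof is elementary, self-contained and exhibits an explicit $t^\star$, but it leans on sign-constancy of $h$ near $0$ and of $f$ just left of $C$, which the stated assumptions do not literally guarantee (e.g.\ $h(x)=x^2\sin(1/x)$ for $x>0$, $h=0$ for $x\le 0$, satisfies Assumption~\ref{ass_function_h1} with $\inf\{x:h(x)\neq0\}=0$ yet has no sign-constant right neighborhood of $0$, and an analogous oscillation of $g-\bar g$ at $C$ is compatible with piecewise continuity); your convolution-support argument is insensitive to such sign oscillations and is therefore more robust, at the cost of importing a deep classical theorem and being non-constructive about the maximizing $t$.
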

 The condition $\inf \{x: h(x)\neq 0\}=0$ guarantees that any $0< \theta<1$ as  in \eqref{alt_d_g} yields a non-constant mean, i.e.\ corresponds to an alternative, with $\theta$ being the change point.

Similarly to Theorem~\ref{thm_alternatives} for the AMOC situation we require $\|\hat C_n\|=o_P(n\|\Delta_n\|^2)$ to obtain asymptotic power one, see also  Lemma \ref{lemma_chat} in the appendix.
First, we state the result for the fully functional statistics.
\begin{thmrep} \label{thm_gradual_alternative}
 Let $\sqrt{n}\|\Delta_n\|\to \infty$ and $X_1,\ldots, X_n$ follow the local gradual model \eqref{alt_grad_local}, where the errors fulfill 
Assumptions \ref{ass_covop} and \ref{ass_limit_FF}. If $h$ satisfies Assumption~\ref{ass_function_h1} and 
 \eqref{eq:detect} holds,
 then $T_{n, \operatorname{FF}}(h) \stackrel{\mathsf{P}}{\to} \infty$. If additionally  $\|\hat C_n\|=o_P(n\|\Delta_n\|^2)$, then  $T_{n, \operatorname{WF}}(h) \stackrel{\mathsf{P}}{\to} \infty$.
\end{thmrep}

Next, we give the result for the dimension reduced statistic. In this case, we can no longer expect the estimator $\hat C_n$ to be consistent for the uncontaminated covariance operator $C_\varepsilon$. However, we can still assume that the eigenfunctions converge, that means for some orthonormal functions $\tilde{v}_j\in L^2(\mathcal{Z})$, $j=1,\ldots,p$, we have 
\begin{equation}\label{eq_ass_est_grad_alt}
    \|\hat{v}_p-\tilde s_p\,\tilde v_p\|=o_P(1) \  \  \text{ for } p=1,\ldots,d
    \end{equation}
where $\tilde s_j=\sign(\int \hat{v}_p(t)\tilde v_p(t)\,dt)$.
For the AMOC and epidemic change alternative and the standard PCA (based on the covariance not long-run covariance estimator) estimator this has been shown in \cite[Lemma~2]{berkes2009detecting} respectively Lemma 2.4 in combination with Theorem 2.1 in \cite{aston2012detecting}.

Because the dimension reduced statistic is based on a projection of the data into a lower dimensional space, asymptotically defined by $\tilde{v}_1,\ldots,\tilde{v}_d$, asymptotic power one is only achieved if the signal projected into that subspace is still sufficiently large. Thus, additionally, we assume
\begin{align}\label{eq_ass_grad_alt_2}
    \sum_{p=1}^d \langle\Delta_n, \tilde{v}_p\rangle^2\ge c\, \|\Delta_n\|^2\quad \text{for some } 0<c\le 1.
\end{align}
Indeed, in this case,  by Parseval's identity, the two quantities are  of the same order.

\begin{thmrep} \label{thm_gradual_alternative_DR}    
 Let  $X_1,\ldots, X_n$ follow  model \eqref{alt_grad_local} with a change $n\|\Delta_n\|^2 \rightarrow \infty$,
 and let \eqref{eq_ass_est_grad_alt} and \eqref{eq_ass_grad_alt_2} hold. Furthermore, let the errors fulfill Assumptions \ref{ass_covop} and \ref{ass_limit_FF} and let $\|\hat{C}\|=o_P(n\|\Delta_n\|^2)$.  If $h$ satisfies Assumption~\ref{ass_function_h1} and  \eqref{eq:detect} holds,  then $T_{n, \operatorname{PC}}(h) \stackrel{\mathsf{P}}{\to} \infty$. 
\end{thmrep}
\section{Conclusions}
For the AMOC situation, we introduce a new fully functional test statistics that takes the covariance structure into account, thus compromising between the strength of the dimension-reduction approach and the unweighted fully functional approach that have previously been compared in the literature.
There is far less literature on the detection of gradual changes in functional data, a gap that is narrowed by the present work as we combine methodology previously proposed for gradual changes in univariate data with the three approaches for functional data, that were already discussed for the AMOC situation. 
Furthermore, the results in this paper indicate that all considered test statistics also have power against different deviations from mean-stationarity of the underlying processes, even if the type of change is misspecified a-priori.

  \section*{Acknowledgements}
The research was supported by German Research Foundation (Deutsche Forschungsgemeinschaft - DFG), project Gradual functional changes, KI~1443/6-1 and WE~5988/5 within the WEAVE cooperation between the DFG and GA\v CR. Furthermore, Hedvika Ranošová acknowledges support from the Charles University Grant Agency, project GAUK 70324.
  
\clearpage
\appendix
\section{Validity of the assumptions}
\subsection{Assumptions on sums of the error process}
\subsubsection{Functional central limit theorems}\label{sec_ass_fclt}

In this section, we provide several sets of dependency conditions for which the functional limit theorem in Assumption~\ref{ass_limit_FF} holds. Several notions of weak dependencies were introduced in the literature, starting with \cite{rosenblatt1956central} and \cite{ibragimov1962some}. We restrict the discussion to the following definitions of dependence of functional time series.

\begin{thm}\label{thm_ass_FCLT} Assumption~\ref{ass_limit_FF} is satisfied, if Assumption \ref{ass_covop} holds in addition to  one of the following conditions hold for an i.i.d. sequence $\{\eta_i\}_{i \in \Z}$:
\begin{enumerate}[(a)]
\item \textbf{(Near epoch dependence)} There exist constants $C_1>0,C_2>1/2$, such that for all $n\in\Z, m\in\N$ we have
\begin{equation*}
\Big(E\Big[\big\|\varepsilon_n-E\big[\varepsilon_n\big|\eta_{n-m},...,\eta_{n+m}\big]\big\|^2\Big]\Big)^{1/2}\leq C_1m^{-C_2}.
\end{equation*}
\item  \textbf{($\boldsymbol{L^m}$-decomposability)} $E\|\varepsilon_0\|^{m}$ for some $m>2$ and $\varepsilon_i=g(\eta_{i},\eta_{i-1},\ldots)$ for  a deterministic function $g:\mathcal{S}^{\infty} \to L^2$, and additionally for independent copies $\{\eta_i^\ast\}_{i \in \Z}$ of $\eta_0^\ast$ independent of $\{\eta_i\}_{i \in \Z}$, it holds for some $m>2, \kappa>m$ with $\varepsilon_{i,\ell}^\ast = g(\eta_i,\ldots,\eta_{i-\ell+1},\eta_{i-\ell}^\ast,\eta_{i-\ell-1}^\ast,\ldots)$,  that
\begin{equation*}
\sum_{\ell=0}^\infty \big(\E \|\varepsilon_0 - \varepsilon_{0,\ell}^\ast\|^m\big)^{1/\kappa}<\infty.
\end{equation*}
\item \textbf{(Physical dependence measure)}  $\varepsilon_i=g(\eta_{i},\eta_{i-1},\ldots)$ for  a deterministic function $g:\mathcal{S}^{\infty} \to L^2$, and additionally for independent copies $\{\eta_i^\ast\}_{i \in \Z}$ of $\eta_0^\ast$ independent of $\{\eta_i\}_{i \in \Z}$ and $\varepsilon_{i,\ell}' = g(\eta_i,\ldots,\eta_{i-\ell+1},\eta_{i-\ell}^\ast,\eta_{i-\ell-1},\eta_{i-\ell-2},\ldots)$,
\begin{equation*}
\sum_{\ell=0}^\infty \big(\E \|\varepsilon_0 - \varepsilon_{0,\ell}'\|^2\big)^{1/2}<\infty.
\end{equation*}
\end{enumerate}

\end{thm}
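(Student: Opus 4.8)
The plan is to reduce the functional CLT in Assumption~\ref{ass_limit_FF} to known invariance principles for Hilbert-space valued weakly dependent sequences, treating each of the three dependence notions separately but via a common template: (i) verify tightness of the partial sum process $\{n^{-1/2}\sum_{i=1}^{\lfloor tn\rfloor}\varepsilon_i\}$ in $D^H[0,1]$, and (ii) identify the finite-dimensional limits as those of the $\mathcal H$-valued Brownian motion $W_{C_\varepsilon}$ with the long-run covariance operator $C_\varepsilon$ from Assumption~\ref{ass_covop}. The moment condition $\E\|\varepsilon_1\|^k<\infty$ for some $k>2$ together with the summability $\sum_{r\in\Z}\|C_r\|<\infty$ (and the extra coordinatewise summability in Assumption~\ref{ass_covop}) is exactly what is needed to make the long-run covariance operator well-defined and to control the second-moment increments that drive tightness.

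For part (a), near epoch dependence on an i.i.d.\ base sequence, I would invoke the functional CLT for NED sequences in Hilbert spaces; the approximating conditional expectations $\E[\varepsilon_n\mid\eta_{n-m},\dots,\eta_{n+m}]$ are functions of finitely many i.i.d.\ inputs, hence form an $(2m+1)$-dependent array, for which the $\mathcal H$-valued invariance principle is classical, and the rate $C_1 m^{-C_2}$ with $C_2>1/2$ makes the approximation error negligible in $D^H[0,1]$ after the standard blocking argument. For parts (b) and (c), $L^m$-decomposability and the physical dependence measure, the sequence is a Bernoulli shift $\varepsilon_i=g(\eta_i,\eta_{i-1},\dots)$, and I would use the $m$-approximation / coupling construction: the truncated versions $\varepsilon_{i,\ell}^\ast$ (resp.\ $\varepsilon_{i,\ell}'$) are $\ell$-dependent, the summability of $(\E\|\varepsilon_0-\varepsilon_{0,\ell}^\ast\|^m)^{1/\kappa}$ (resp.\ the square-root version) gives a telescoping bound on the tail contribution, and one passes to the limit through the $\ell$-dependent approximations whose invariance principle is again standard. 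In all three cases, the covariance of the limit is computed as $\lim_n n^{-1}\E\big[(\sum_{i=1}^n\varepsilon_i)\otimes(\sum_{i=1}^n\varepsilon_i)\big]=\sum_{r\in\Z}C_r=C_\varepsilon$, using dominated convergence justified by $\sum_r\|C_r\|<\infty$.

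The main obstacle I anticipate is the tightness step in $D^H[0,1]$ under only a $(2+\delta)$ moment assumption: unlike the real-valued case one cannot simply invoke a fourth-moment Billingsley-type criterion, and one must instead combine a maximal inequality for the dependent partial sums (obtained from the dependence structure) with the chaining/blocking technique adapted to the Hilbert-space modulus of continuity, being careful that the long-run variance summability rather than individual summability of $\|\varepsilon_i\|$ is what one has. Concretely, for (a) this means controlling $\E\|\sum_{i=a+1}^{b}\varepsilon_i\|^2\lesssim (b-a)$ uniformly via the NED rate and the base independence; for (b) and (c) the analogous bound follows from the decomposability coefficients. Once the uniform second-moment increment bound is in place, tightness follows from the standard criterion for $D^H[0,1]$, and combined with convergence of finite-dimensional distributions (themselves reduced to the multivariate weakly dependent CLT applied to finitely many coordinates $\langle\varepsilon_i,v_{p,\varepsilon}\rangle$) the claimed weak convergence follows. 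I would present (a) in full and then indicate that (b) and (c) are handled by the same scheme with the $\ell$-dependent approximations replacing the $(2m+1)$-dependent conditional expectations, citing the relevant invariance principles rather than reproving them.
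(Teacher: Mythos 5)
Your plan is sound in outline, but it takes a noticeably heavier route than the paper, whose entire proof is a one-line citation: part (a) is a special case of Theorem~4.6 of \cite{chen1998central} (FCLT for Hilbert-space valued near epoch dependent sequences), part (b) is Theorem~1.1 of \cite{berkes2013weak} ($L^m$-$m$-approximability), and part (c) is Corollary~1.3 of \cite{jirak2013weak} (physical dependence), each of which directly delivers the weak invariance principle in $D^H[0,1]$ with the long-run covariance operator $C_\varepsilon$. What you propose --- tightness plus finite-dimensional convergence via $(2m+1)$-dependent conditional-expectation approximations for (a) and $\ell$-dependent coupled Bernoulli-shift truncations for (b) and (c) --- is essentially a sketch of how those cited theorems are themselves proved, so you would be re-deriving rather than invoking them; since you also say you would ultimately cite the relevant invariance principles, the argument does close, but then the intermediate blocking/chaining machinery is redundant. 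The trade-off: the citation route is short and safe, whereas your self-contained route founders exactly where you flag the obstacle --- establishing a Hilbert-space maximal inequality and the modulus-of-continuity bound under only a $(2+\delta)$-moment assumption is the genuinely hard technical core of the cited papers (and for (a) the interplay between the NED rate $C_2>1/2$ and the block lengths is delicate), so carrying it out in full would amount to reproducing a substantial portion of \cite{chen1998central}, \cite{berkes2013weak} and \cite{jirak2013weak}. Your identification of the limit covariance as $\sum_{r\in\Z}C_r$ via $\sum_r\|C_r\|<\infty$ matches the paper's setup and is fine.
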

Part a) is a special case of Theorem 4.6 of  \cite{chen1998central},  b) is Theorem 1.1 of  \cite{berkes2013weak}, and  c) is Corollary 1.3 of  \cite{jirak2013weak}.
\begin{thm}\label{thm_fclt}
Let $\{\varepsilon_i\}_{i \in \Z}$ satisfy Assumption~\ref{ass_limit_FF}. Then under $H_0$
\begin{equation}
     T_{n,FF} \stackrel{d}{\to} \sup_{0<t<1} \|W_{C_\varepsilon}(t)-tW_{C_\varepsilon}(1)\|
\end{equation}
  where $\{W_{C_\varepsilon}(t),t \in [0,1]\}$ is a $\mathcal{H}$-valued Brownian motion with an associated covariance operator $C_\varepsilon$.
\end{thm}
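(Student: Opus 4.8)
The plan is to show that, under $H_0$, the statistic $T_{n,\mathrm{FF}}$ is, up to an asymptotically negligible term, a fixed continuous functional of the rescaled partial-sum process of the errors, and then to invoke Assumption~\ref{ass_limit_FF} together with the continuous mapping theorem.

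First I would note that under $H_0$ we have $X_i=\mu+\varepsilon_i$ with a common $\mu$, so the centered partial sums satisfy $\sum_{i=1}^k X_i-\tfrac kn\sum_{i=1}^n X_i=\sum_{i=1}^k\varepsilon_i-\tfrac kn\sum_{i=1}^n\varepsilon_i$; the mean cancels and $T_{n,\mathrm{FF}}$ depends only on the noise. Writing $Z_n(t)=\tfrac1{\sqrt n}\sum_{i=1}^{\lfloor tn\rfloor}\varepsilon_i\in D^H[0,1]$, a direct computation gives $\tfrac1{\sqrt n}\bigl(\sum_{i=1}^k\varepsilon_i-\tfrac kn\sum_{i=1}^n\varepsilon_i\bigr)=Z_n(k/n)-(k/n)Z_n(1)$, so that $T_{n,\mathrm{FF}}=\max_{1\le k<n}\|Z_n(k/n)-(k/n)Z_n(1)\|$. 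I would then compare this discrete maximum with the continuous supremum $\Psi(Z_n):=\sup_{0\le t\le 1}\|Z_n(t)-tZ_n(1)\|$: on each interval $[k/n,(k+1)/n)$ the process $Z_n$ is constant, so $\|Z_n(t)-tZ_n(1)\|\le \|Z_n(k/n)-(k/n)Z_n(1)\|+|t-k/n|\,\|Z_n(1)\|$, whence $|T_{n,\mathrm{FF}}-\Psi(Z_n)|\le \tfrac1n\|Z_n(1)\|=O_P(n^{-1})$, using that $\|Z_n(1)\|=O_P(1)$ by Assumption~\ref{ass_limit_FF}.

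Next I would argue that $\Psi$ is continuous at continuous paths. The map $x\mapsto(t\mapsto x(t)-tx(1))$ is an affine, hence continuous, self-map of $D^H[0,1]$, and $x\mapsto\sup_{0\le t\le 1}\|x(t)\|$ is Lipschitz for the uniform norm; since convergence in the Skorokhod $J_1$ topology towards a continuous limit is equivalent to uniform convergence, $\Psi$ is almost surely continuous at $W_{C_\varepsilon}$, whose sample paths are continuous. Assumption~\ref{ass_limit_FF} gives $Z_n\Rightarrow W_{C_\varepsilon}$ in $D^H[0,1]$, so by the continuous mapping theorem $\Psi(Z_n)\dto \sup_{0\le t\le1}\|W_{C_\varepsilon}(t)-tW_{C_\varepsilon}(1)\|$, and by the $O_P(n^{-1})$ bound and Slutsky's lemma the same limit holds for $T_{n,\mathrm{FF}}$. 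Finally, since $W_{C_\varepsilon}$ is continuous and $W_{C_\varepsilon}(0)=0$, the supremum over $[0,1]$ equals the one over $(0,1)$, which is exactly the stated limit.

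The only mildly delicate points are the passage from the discrete maximum to the continuous supremum (handled by the crude $1/n$ bound above) and the topological remark that the continuous mapping theorem applies because the limiting $\mathcal H$-valued Brownian motion has continuous sample paths; neither is a genuine obstacle, since all the probabilistic content is already packaged in Assumption~\ref{ass_limit_FF}.
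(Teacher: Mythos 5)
Your proof is correct and follows essentially the same route as the paper: both express $T_{n,\mathrm{FF}}$ under $H_0$ through the rescaled partial-sum process of the errors, apply Assumption~\ref{ass_limit_FF} with the continuous mapping theorem, and dispose of the discrepancy between the discrete centering and the Brownian-bridge functional by the same $\tfrac1n\|Z_n(1)\|=o_P(1)$ bound. Your extra remark on continuity of the supremum functional at continuous limit paths in the Skorokhod topology is a slightly more explicit justification of the continuous mapping step that the paper leaves implicit.
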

This can be shown by standard arguments; we give a proof for completeness:
\begin{proof} As we are under the null-hypothesis, we have
\begin{equation*}
 T_{n,FF}= \sup_{0<t<1}\bigg\|\frac{1}{\sqrt{n}}\sum_{i=1}^{\lfloor nt \rfloor}\Big(\varepsilon_i - \frac{1}{n}\sum_{i=1}^n\varepsilon_i \Big)\bigg\|
\end{equation*}
By the weak convergence of the partial sum process (Assumption~\ref{ass_limit_FF}) and the continuous mapping theorem, we have
\begin{equation*}
 \sup_{0<t<1}  \bigg\|\frac{1}{\sqrt{n}}\sum_{i=1}^{\lfloor nt \rfloor}\varepsilon_i - \frac{t}{\sqrt{n}}\sum_{i=1}^n\varepsilon_i  \bigg\|\stackrel{d}{\to} \sup_{0<t<1} \|W_C(t)-tW_C(1)\|.
\end{equation*}
 Furthermore,
 \begin{equation*}
\sup_{0<t<1} \bigg\|\frac{1}{\sqrt{n}}\sum_{i=1}^{\lfloor nt \rfloor}\Big(\varepsilon_i - \frac{1}{n}\sum_{i=1}^n\varepsilon_i \Big)-\frac{1}{\sqrt{n}}\Big(\sum_{i=1}^{\lfloor nt \rfloor}\varepsilon_i - t\sum_{i=1}^n\varepsilon_i\Big)\bigg\|=\sup_{0<t<1} \frac{1}{\sqrt{n}} \left|\frac{\lfloor nt \rfloor}{n}-t\right|\bigg\|\sum_{i=1}^n\varepsilon_i \bigg\|=\frac{1}{n^{3/2}}\bigg\|\sum_{i=1}^n\varepsilon_i \bigg\|\stackrel{P}{\to}0,
 \end{equation*}
 also using Assumption~\ref{ass_limit_FF}, which completes the proof.
\end{proof}

\subsubsection{Invariance principles of the projections}\label{app_inv_prin}

\begin{prop} \label{thm_wi_d_D}  Let $\{\varepsilon_i\}_{i \in \Z}$ satisfy Assumption~\ref{ass_limit_FF}. Then for every $d\in\N$, possibly after changing the probability space, there exist processes $\{\mathbf W_{\boldsymbol{\lambda},n}^{(d)}(t):t\ge 1\}$ with $\mathbf{W}_{\boldsymbol{\lambda},n}(t)=(\sqrt{\lambda_1}\,W_{n,1}(t),\ldots,\sqrt{\lambda_d}\,W_{n,d}(t))^T$, where $\{W_{n,p}(\cdot)\}$, $p=1,\ldots,d$, are independent standard Wiener processes, such that
\begin{align}\label{eq_ass_gradual_scores}
    \frac{1}{\sqrt{n}}\max_{1\le k\le n}\left\|\sum_{i=1}^k\boldsymbol{\eta}_{d}(i)-\mathbf{W}_{\boldsymbol{\lambda},n}(k)\right\|\overset{a.s.}{\to} 0,
\end{align}
where $\boldsymbol{\eta}_{d}(i)=(\langle \varepsilon_i,v_1\rangle,\ldots,\langle \varepsilon_i,v_d\rangle)^T $ and $\|\cdot\|$ denotes the Euclidean norm.
\end{prop}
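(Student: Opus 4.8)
The statement is a strong (almost sure) invariance principle for the $d$-dimensional projected partial sums, but only at the crude scale $o_{a.s.}(\sqrt n)$; since the sole hypothesis is the functional limit theorem of Assumption~\ref{ass_limit_FF}, no Komlós--Major--Tusnády type rate is available (nor needed), and the phrase ``possibly after changing the probability space'' signals that a soft argument through Skorokhod's representation theorem is intended. First I would project the functional CLT: the coordinate map $\pi_d:\mathcal{H}\to\R^d$, $\pi_d(x)=(\langle x,v_1\rangle,\dots,\langle x,v_d\rangle)^T$, is bounded and linear, hence induces a continuous map $D^{\mathcal{H}}[0,1]\to D^{\R^d}[0,1]$, so the continuous mapping theorem applied to \eqref{eq_fclt_ff} gives, with $\mathbf{Y}_n(t):=\tfrac{1}{\sqrt n}\sum_{i=1}^{\lfloor tn \rfloor}\boldsymbol{\eta}_d(i)$,
\[
\{\mathbf{Y}_n(t),\,t\in[0,1]\}\ \Longrightarrow\ \bigl\{\bigl(\langle W_{C_\varepsilon}(t),v_1\rangle,\dots,\langle W_{C_\varepsilon}(t),v_d\rangle\bigr)^T,\ t\in[0,1]\bigr\}
\]
in $D^{\R^d}[0,1]$. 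Since the $v_p$ diagonalise the covariance operator $C_\varepsilon$ of $W_{C_\varepsilon}$, the limit is a continuous, zero-mean Gaussian process with independent increments whose covariance at time $t$ equals $t\,\operatorname{diag}(\lambda_1,\dots,\lambda_d)$, i.e.\ it is $\mathbf{B}(t):=(\sqrt{\lambda_1}\,W_1(t),\dots,\sqrt{\lambda_d}\,W_d(t))^T$ for i.i.d.\ standard Wiener processes $W_1,\dots,W_d$. (This is precisely the step that uses the principal-curve basis; for a general basis the components of the limit are correlated, as noted in Remark~\ref{rem2}.)

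Next I would invoke Skorokhod's representation theorem: $(D^{\R^d}[0,1],d_{J_1})$ being separable, there is a probability space carrying $\mathbf{Y}_n'\eqdist\mathbf{Y}_n$ and $\mathbf{B}'\eqdist\mathbf{B}$ with $d_{J_1}(\mathbf{Y}_n',\mathbf{B}')\to0$ a.s. Because Brownian paths are a.s.\ continuous, I can upgrade this pathwise to uniform convergence using the elementary fact that $J_1$-convergence to a continuous function is equivalent to uniform convergence, obtaining $\sup_{0\le t\le1}\|\mathbf{Y}_n'(t)-\mathbf{B}'(t)\|\to0$ a.s.; the increments of $\mathbf{Y}_n'$ at the grid points $k/n$ recover a copy of $\boldsymbol{\eta}_d(1),\dots,\boldsymbol{\eta}_d(n)$ with the original joint law, so the change of probability space is harmless. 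Finally, setting $W_{n,p}(t):=\sqrt n\,W_p'(t/n)$ (again a standard Wiener process by Brownian scaling, independent over $p$) and $\mathbf{W}_{\boldsymbol{\lambda},n}(t):=(\sqrt{\lambda_1}\,W_{n,1}(t),\dots,\sqrt{\lambda_d}\,W_{n,d}(t))^T$, one has $\mathbf{W}_{\boldsymbol{\lambda},n}(k)=\sqrt n\,\mathbf{B}'(k/n)$ and $\tfrac{1}{\sqrt n}\sum_{i=1}^k\boldsymbol{\eta}_d(i)=\mathbf{Y}_n'(k/n)$, whence
\[
\frac{1}{\sqrt n}\max_{1\le k\le n}\Bigl\|\sum_{i=1}^k\boldsymbol{\eta}_d(i)-\mathbf{W}_{\boldsymbol{\lambda},n}(k)\Bigr\|=\max_{1\le k\le n}\bigl\|\mathbf{Y}_n'(k/n)-\mathbf{B}'(k/n)\bigr\|\le\sup_{0\le t\le1}\|\mathbf{Y}_n'(t)-\mathbf{B}'(t)\|\ \overset{a.s.}{\to}\ 0,
\]
which is the assertion.

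The genuinely delicate point is the passage from $J_1$-convergence to uniform convergence in the second step, which must be applied pathwise and relies on continuity of the Brownian limit; everything else --- reading off the diagonal form of the limiting covariance from the definition of an $\mathcal{H}$-valued Brownian motion with covariance operator $C_\varepsilon$, and dominating the discrete maximum over $k$ by the continuous supremum over $t$ --- is routine bookkeeping.
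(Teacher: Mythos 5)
Your proposal is correct and follows essentially the same route as the paper: projecting the functional CLT of Assumption~\ref{ass_limit_FF} through the continuous linear map onto the principal curves, identifying the diagonal (hence independent-component) Gaussian limit, applying Skorokhod's representation theorem, and rescaling by Brownian scaling to obtain \eqref{eq_ass_gradual_scores}. The only difference is cosmetic: you spell out the upgrade from $J_1$-convergence to uniform convergence via continuity of the Brownian limit, which the paper leaves implicit in its appeal to continuity of the maximum functional on $D^d[0,1]$.
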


\begin{proof} The mapping $T_d:\mathcal{H}\rightarrow \R^d$ (equipped with the Euclidean norm) with  $T_d(y)= (\langle y,v_1\rangle,\ldots,\langle y,v_d\rangle)^T $  is linear and continuous, so by Assumption~\ref{ass_limit_FF} and the continuous mapping theorem, we have the weak convergence
\begin{equation*}
  \left( \frac{1}{\sqrt{n}}\sum_{i=1}^{\lfloor nt \rfloor}\boldsymbol{\eta}_{d}(i)\right)_{t\in[0,1]}= \left( T_d\Big(\frac{1}{\sqrt{n}}\sum_{i=1}^{\lfloor nt \rfloor}\varepsilon_i\Big)\right)_{t\in[0,1]}
  \stackrel{D^d[0,1]}{\Longrightarrow}
\left( T_d\big(W_{C_\varepsilon}\big)\right)_{t\in[0,1]},
\end{equation*}
where $T_d\big(W_{C_\varepsilon}(t)\big)=(\langle W_{C_\varepsilon}(t),v_1\rangle,\ldots,\langle W_{C_\varepsilon}(t),v_d\rangle)^T, \ t\in[0,1]$, is $d$-dimensional Brownian motion with covariance structure $\cov(\langle W_{C_\varepsilon}(t),v_i\rangle,\langle W_{C_\varepsilon}(t),v_j\rangle)=\langle tC_\varepsilon v_i,v_j \rangle =t\langle \lambda_iv_i,v_j \rangle =t\lambda_i \mathbb{1}_{i=j}$. As the correlation between the components is $0$, the Wiener processes $(\langle W_{C_\varepsilon}(t),v_1\rangle)_{t\in[0,1]}$, ..., $(\langle W_{C_\varepsilon}(t),v_d\rangle)_{t\in[0,1]}$ are independent. 

Now by the Skorokhod representation theorem, there exists copies $W_n=(W_n^{(1)},...,W_n^{(d)})^T$ of $T_d\big(W_{C_\varepsilon}(t)\big)$ such that the Skorokhod distance of $\big( \frac{1}{\sqrt{n}}\sum_{i=1}^{\lfloor nt \rfloor}\boldsymbol{\eta}_{d}(1)\big)$, ${t\in[0,1]}$ and $W_n(t)$, $t\in[0,1]$ converges to 0. Setting $W_{n,j}(k)=\sqrt{n}\lambda_j^{-1/2}W_n^{(j)}(k/n)$ for $j=1,..,d$, the statement of the proposition follows, because the maximum is a continuous mapping on $D^d[0,1]$.
\end{proof}

\subsection{Covariance estimation}\label{sec_cov}
In this section, we summarize results under which we get a consistent estimation and even corresponding rates for the covariance operator as required by Assumption~\ref{ass_covariance_estimation}.

To elaborate, we work with a kernel-based estimator of the long-run covariance
\begin{equation}
    \hat{C} = \sum_{r = -n}^n K \left(\frac{r}{h_n}\right) \hat{C}_r \label{eq_kernel_est} 
\end{equation}
where
\begin{equation}
 \hat{C}_r =  
 \begin{cases} 
 \frac{1}{n} \sum_{i=1}^{n-r} (X_i -\bar{X}_n)\otimes(X_{i+r}-\bar{X}_n), \quad r \geq 0,  \\
 \frac{1}{n} \sum_{i=1}^{n-|r|} (X_{i-r} -\bar{X}_n)\otimes(X_{i}-\bar{X}_n), \quad r < 0, 
 \end{cases} \label{eq_lag_cov}
\end{equation}
 with prespecified kernel function $K$ and bandwidth $h_n$. The assumptions on the kernel function and the bandwidth are slightly modified from \cite{horvath2024change} (Assumptions 3.1.4 and 3.1.5).
  For kernels, we thus assume that $K(0)=1$, $K$ is symmetric, bounded and Lipschitz continuous. Moreover, it is zero outside an interval $[-c,c]$ for some $c >0$. For $h_n$ we assume that 
 \begin{equation}
     h_n \to \infty, \quad \frac{h_n}{\sqrt{n}} \to 0 \quad \text{as} \quad  n \to \infty. \label{eq_ass_hn}
 \end{equation}
Assumption 3.1.4 in \cite{horvath2024change} replaces the second convergence by $\frac{h_n}{n} \to 0$ (which is implied by \eqref{eq_ass_hn}). The optimal rate of $h_n$ is $o(n^{\frac{1}{2q+1}})$, see \cite{rice2017plug}, where $q$ is the order of the kernel $K$, that is, a number such that
$$0< \lim_{x \to 0} \frac{1-K(x)}{|x|^q}<\infty.$$ The assumptions in \eqref{eq_ass_hn} are satisfied with the optimal bandwidth rate if $q>\frac{1}{2}$, which is satisfied by all popular kernels mentioned in \cite{horvath2024change}; including the Bartlett kernel $K_B(x)=\max\{1-|x|,0\}$.

The following lemma 
explains the behavior of the kernel-based estimator under the null-hypothesis and under alternatives.

\begin{lemmarep}\label{lemma_chat} Let $K_B$ a symmetric, continuous kernel function, bounded by 1 with compact support and with $K_B(0)=1$. Let the bandwidth fulfill \eqref{eq_ass_hn}.
Consider $L^{m}$-decomposable functional time series, in the sense of Theorem~\ref{thm_ass_FCLT} b) with $\lim_{m\rightarrow \infty} m (\E \|\varepsilon_0 - \varepsilon_{0,\ell}^\ast\|^2)^{1/2}=0$,
fulfilling Assumptions~\ref{ass_covop}.  
\begin{enumerate}[(a)]
\item Then Assumption \ref{ass_covariance_estimation} holds under the null hypothesis. \item Under the alternative model  \eqref{alt_grad_local} with $\sqrt{n}\|\Delta_n\|\rightarrow \infty$, we have that $\|\hat{C}_n\|=o_P(n\|\Delta_n\|^2)$.
\end{enumerate}
\end{lemmarep}

\begin{proof} For Assertion (a), first note that for any $g\in L^2(\mathcal{Z})$, we have $C_\varepsilon(g)=\int c_\varepsilon(s,t)g(t)d\nu(t)$ with $c$ given by $c_\varepsilon(s,t)=\sum_{p=1}^\infty \lambda_{p,\varepsilon}v_{p,\varepsilon}(s)v_{p,\varepsilon}(t)$. Furthermore, we have $\hat{C}(g)=\int \hat{c}_ng(t)d\nu(t)$ with $\hat{c}_n$ as defined by \cite{horvath2013estimation}. If $\int g^2 d\nu\leq 1$, we have
\begin{multline*}
\int \big(\big(\hat{C}-C_\varepsilon\big)(g)\big)^2d\nu=\int \bigg(\int \big(\hat{c}_n(s,t)-c_\varepsilon(s,t)\big)g(t)d\nu(t)\bigg)^2 d\nu(s)\\
\leq \int \bigg(\int \big(\hat{c}_n(s,t)-c_\varepsilon(s,t)\big)^2d\nu(t) \int g^2(t)d\nu(t)\bigg) d\nu(s)\leq \iint \big(\hat{c}_n(s,t)-c_\varepsilon(s,t)\big)^2d\nu(t) d\nu(s)
\end{multline*}
by the H\"older inequality. So we can conclude that
\begin{equation*}
\|\hat{C}-C_\varepsilon\|\leq \iint \big(\hat{c}_n(s,t)-c_\varepsilon(s,t)\big)^2d\nu(t) d\nu(s)\rightarrow 0
\end{equation*}
in probability as $n\rightarrow \infty$, by Theorem 2 by \cite{horvath2013estimation}. The conditions of Theorem~\ref{thm_ass_FCLT} b) ($L^{m}$-decomposability with $\kappa>m>2$) imply the conditions of Theorem 2 by  \cite{horvath2013estimation} (defined like $L^{m}$-decomposability but with $\kappa=m=2$).

For the result in (b), under the alternative,  rewrite $\hat{C}_r$, the lagged covariance operator \eqref{eq_lag_cov},  $r\geq0$, used in the kernel estimate \eqref{eq_kernel_est}, where we can assume without loss of generality, that $\mu=0$ in \eqref{alt_grad_local}:
\begin{align*}
\hat{C}_r &=\frac{1}{n} \sum_{i=1}^{n-r} (X_i -\bar{X}_n)\otimes(X_{i+r}-\bar{X}_n) \\
 &=\frac{1}{n} \sum_{i=1}^{n-r} \left[ (g_i -\bar{g})\Delta_n +\varepsilon_i-\bar{\varepsilon}_n \right]\otimes\left[ (g_{i+r} -\bar{g})\Delta_n +\varepsilon_{i+r}-\bar{\varepsilon}_n \right] \\
 &= \frac{1}{n} \sum_{i=1}^{n-r} (\varepsilon_i-\bar{\varepsilon}_n) \otimes(\varepsilon_{i+r}-\bar{\varepsilon}_n) + \Delta_n\otimes\Delta_n \left[\frac{1}{n} \sum_{i=1}^{n-r} (g_i-\bar{g}_n)(g_{i+r}-\bar{g}_n)\right] \\
 &+ \Delta_n \otimes \left[\frac{1}{n}\sum_{i=1}^{n-r} (g_i-\bar{g}_n)(\varepsilon_{i+r}-\bar{\varepsilon}_n)\right] + \left[\frac{1}{n}\sum_{i=1}^{n-r} (g_{i+r}-\bar{g}_n)(\varepsilon_{i}-\bar{\varepsilon}_n)\right]\otimes\Delta_n \\
 &= \hat{C}_{r, \varepsilon} + \hat{C}_r^{(1)} +\hat{C}_r^{(2)}+\hat{C}_r^{(3)}
\end{align*}
and in the same way for $r<0$
\begin{align*}
\hat{C}_r &=\frac{1}{n} \sum_{i=1}^{n-|r|}(\varepsilon_{i-r}-\bar{\varepsilon}_n) \otimes(\varepsilon_{i}-\bar{\varepsilon}_n) + \Delta_n\otimes\Delta_n \left[\frac{1}{n} \sum_{i=1}^{n-|r|} (g_{i-r}-\bar{g}_n)(g_{i}-\bar{g}_n)\right] \\
 &+ \Delta_n \otimes \left[\frac{1}{n}\sum_{i=1}^{n-|r|} (g_{i-r}-\bar{g}_n)(\varepsilon_{i}-\bar{\varepsilon}_n)\right] + \left[\frac{1}{n}\sum_{i=1}^{n-|r|} (g_{i}-\bar{g}_n)(\varepsilon_{i-r}-\bar{\varepsilon}_n)\right]\otimes\Delta_n \\
 &= \hat{C}_{r, \varepsilon} + \hat{C}_r^{(1)} +\hat{C}_r^{(2)}+\hat{C}_r^{(3)}
\end{align*}
and denote the summed versions $\hat{C}^{(j)} =  \sum_{r = -n}^n K \left(\frac{r}{h_n}\right) \hat{C}^{(j)}_{r}$ for $j = 1,2,3$. For the estimator based on the residuals $\hat{C}_{r, \varepsilon}$, we know by the first part of the lemma that $ \sum_{r = -n}^n K \left(\frac{r}{h_n}\right) \hat{C}_{r, \varepsilon}$ converges to $C_{\varepsilon}$ in probability, so 
\begin{equation*}
\sum_{r = -n}^n K \left(\frac{r}{h_n}\right) \hat{C}_{r, \varepsilon}=O_P(1)=o_P(n\|\Delta_n\|^2)
\end{equation*}
as $\sqrt{n}\|\Delta_n\|\rightarrow \infty$. Now for $\hat{C}^{(1)}$, which is deterministic, we have by our Assumption~\ref{ass_g_funkce} ($g$ being bounded and piecewise continous), that
\begin{equation*}
\frac{1}{n} \sum_{i=1}^{n-r} (g_i-\bar{g}_n)(g_{i+r}-\bar{g}_n)\leq \frac{1}{n}\sum_{i=1}^n(g_i-\bar{g}_n)^2\rightarrow \int_0^1 \left(g(x)  - \int_0^1 g(y) \, dy \right)^2\, dx
\end{equation*}
as $n\rightarrow\infty$, so  $\|\hat{C}_r^{(1)}\| =O(\|\Delta_n^2\|)$. As $K_B$ has a bounded support and thus we have $O(h_n)$ summands, it follows that  $\|\hat{C}^{(1)}\| =O(h_n\|\Delta_n\|^2)=o(n\|\Delta_n\|^2)$ as $h_n=o(\sqrt{n})$. For $\hat{C}^{(2)}$, recall that $g$ is bounded and that $E[|\|\varepsilon_1\|]<\infty$, so
\begin{equation*}
E\bigg[\Big\|\frac{1}{n}\sum_{i=1}^{n-r} (g_{i+r}-\bar{g}_n)(\varepsilon_{i}-\bar{\varepsilon}_n)\Big\|\Bigg]\leq C<\infty
\end{equation*}
for some constant $C$ not depending on $r$, and thus $\|\hat{C}_r^{(2)} \|=O_P(\|\Delta_n\|)$. We conclude that $\|\hat{C}^{(2)}\| =O_P(h_n\|\Delta_n\|)=o_P(n\|\Delta_n\|^2)$. With the same arguments also $\|\hat{C}^{(3)}\| =O_P(h_n\|\Delta_n\|)=o_P(n\|\Delta_n\|^2)$, which completes the proof.
\end{proof}
 
\section{Proofs}

\subsection{Proofs of Section~\ref{sec_abrupt_change}}
We start with a proof for the null asymptotics of the weighted functional statistics as given in Theorem~\ref{thm_asymptotics_h0}.

\begin{proof}[Proof of Theorem~\ref{thm_asymptotics_h0}]
 Denote ${T}_{n,WF}(C_{\varepsilon})$ the version of the test statistic $T_{n,WF}$ with the true long-run covariance operator $C_\varepsilon$ and the true corresponding largest eigenvalue $\lambda_{1,\varepsilon}$. 
 Under the null hypothesis, 
it holds by Assumption~\ref{ass_limit_FF} as $n\to\infty$
    \begin{equation}
      {T}_{n,WF}(C_{\varepsilon})\dto  \sup_{0 \leq t \leq 1 }\left\|(C_\varepsilon+\lambda_{1,\varepsilon} \operatorname{Id})^{-\frac 12} (W_C(t)-tW_C(1))\right\|=\sup_{0 \leq t \leq 1 }  \sum_{p=1}^\infty {\frac{{\lambda}_{p,\varepsilon}}{{\lambda}_{p,\varepsilon}+{\lambda}_{1,\varepsilon}}} B^2_p(t) \label{limita_max}
    \end{equation}
    where $W_C(t)$ is a Wiener process associated with a covariance operator $C_{\varepsilon}$ and $B_p$ are standard i.i.d.\ Brownian bridges.

The operator $(C_\varepsilon+\lambda_{1,\varepsilon} \operatorname{Id})$ clearly shares the same eigenfunctions with $C_{\varepsilon}$ with corresponding eigenvalues $\lambda_{1,\varepsilon}+\lambda_{p,\varepsilon}$. Consequently the operator $(C_\varepsilon+\lambda_{1,\varepsilon} \operatorname{Id})^{\frac 12}$ also shares the same eigenfunctions with corresponding eigenvalues $\sqrt{\lambda_{1,\varepsilon}+\lambda_{p,\varepsilon}}$ (see e.g. \cite{brayman2024functional}, Chapter 12), such that \begin{align*}
&(C_\varepsilon+\lambda_{1,\varepsilon} \operatorname{Id})^{-\frac 12} v_{p,\varepsilon}
=(C_\varepsilon+\lambda_{1,\varepsilon} \operatorname{Id})^{-\frac 12} (\lambda_{p,\varepsilon}+\lambda_{1,\varepsilon})^{-\frac 12}(C_\varepsilon+\lambda_{1,\varepsilon} \operatorname{Id})^{\frac 12}v_{p,\varepsilon}\\
&=(\lambda_{p,\varepsilon}+\lambda_{1,\varepsilon})^{-\frac 12}(C_\varepsilon+\lambda_{1,\varepsilon} \operatorname{Id})^{-\frac 12}(C_\varepsilon+\lambda_{1,\varepsilon} \operatorname{Id})^{\frac 12}v_{p,\varepsilon}= (\lambda_{p,\varepsilon}+\lambda_{1,\varepsilon})^{-\frac 12} v_{p,\varepsilon}.
\end{align*}
As $(\lambda_{p,\varepsilon}+\lambda_{1,\varepsilon})^{-\frac 12} \nearrow \lambda_{1,\varepsilon}^{- \frac 12}$ as $p\rightarrow \infty$, we get
 $ \| (C_\varepsilon+\lambda_{1,\varepsilon} \operatorname{Id})^{-\frac 12} \|  = \lambda_{1,\varepsilon}^{- \frac 12}$. Analogously, it holds 
 \begin{align}
 \|(\hat{C}+\hat{\lambda}_1\operatorname{Id})^{-\frac 12} \| = \hat{\lambda}_1^{-\frac{1}{2}}.\label{eq_value_inversesquarerooteigenvalue}
 \end{align}
As  $\|\hat{C}- C_\varepsilon\| = o_p(1)$, by Assumption~\ref{ass_covariance_estimation},  for the largest eigenvalues, we have $|\hat{\lambda}_1-\lambda_{1,\varepsilon}|=\big|\|\hat{C}\|-\|C_\varepsilon\|\big|\le \|\hat{C}- C_\varepsilon\| = o_p(1)$. Consequently, 
\begin{align}\label{eq_proof_21_1}
\|(\hat{C}+\hat{\lambda}_1 \operatorname{Id}) - (C_\varepsilon+\lambda_{1,\varepsilon} \operatorname{Id})\| = o_p(1)\quad \text{ and }\quad\hat{\lambda}_1^{-1/2}=O_P(1).
\end{align}

The functional $A\rightarrow A^{1/2}$ is operator concave (this follows analogously as for matrices, see Chapter 5 of \cite{bhatia2013matrix}), so we have that 
\begin{align}
&\Big\|\big(\hat{C}+\hat{\lambda}_1 \operatorname{Id}\big)^{1/2} - \big(C_\varepsilon+\lambda_{1,\varepsilon} \operatorname{Id}\big)^{1/2}\Big\| \leq \| |(\hat{C}+\hat{\lambda}_1 \operatorname{Id}) - (C_\varepsilon+\lambda_{1,\varepsilon} \operatorname{Id})|^{1/2}\|\notag\\
&= \| |(\hat{C}+\hat{\lambda}_1 \operatorname{Id}) - (C_\varepsilon+\lambda_{1,\varepsilon} \operatorname{Id})|\|^{1/2}= o_p(1), \label{eq_consistency_cov_square_roots}
\end{align}
where the absolute value $|A|$ of an operator $A$ can defined by replacing the eigenvalues by their absolute values.

Denote $S(t)= \frac{1}{\sqrt{n}} \sum_{i=1}^{\lfloor tn \rfloor} (X_i - \bar{X}_n) = \frac{1}{\sqrt{n}}(\sum_{i=1}^{\lfloor tn \rfloor} X_i - \frac{\lfloor tn \rfloor}{n}\sum_{i=1}^n X_i)$
and by \eqref{limita_max} we get $\sup_{0<t<1}\|(C_\varepsilon+\lambda_{1,\varepsilon} \operatorname{Id})^{-\frac{1}{2}}S(t)\| = O_P(1)$. By the inverse triangle inequality and \eqref{eq_proof_21_1}, we now get 
\begin{align*}
&|{T}_{n,WF}(\hat{C})-{T}_{n,WF}(C_{\varepsilon})| = \left|\sup_{0<t<1}\|(\hat{C}+\hat{\lambda}_1 \operatorname{Id})^{-\frac{1}{2}}S(t)\|- \sup_{0<t<1}\|(C_\varepsilon+\lambda_{1,\varepsilon} \operatorname{Id})^{-\frac{1}{2}}S(t)\|\right| \\
&\leq \sup_{0<t<1} \left\|\left((\hat{C}+\hat{\lambda}_1 \operatorname{Id})^{-\frac 12 }-(C_\varepsilon+\lambda_{1,\varepsilon} \operatorname{Id})^{-\frac 12} \right)S(t) \right\| \\
&=  \sup_{0<t<1} \left\|(\hat{C}+\hat{\lambda}_1 \operatorname{Id})^{-\frac 12 }\left((C_\varepsilon+\lambda_{1,\varepsilon} \operatorname{Id})^{\frac 12}-(\hat{C}+\hat{\lambda}_1 \operatorname{Id})^{\frac 12 } \right)(C_\varepsilon+\lambda_{1,\varepsilon} \operatorname{Id})^{-\frac 12}S(t) \right\| \\
&\leq \|(\hat{C}+\hat{\lambda}_1 \operatorname{Id})^{-\frac 12 }\| \left\|(\hat{C}+\hat{\lambda}_1 \operatorname{Id})^{\frac 12 } -(C_\varepsilon+\lambda_{1,\varepsilon} \operatorname{Id})^{\frac 12}\right\|   \sup_{0<t<1} \|(C_\varepsilon+\lambda_{1,\varepsilon} \operatorname{Id})^{-\frac 12}S(t)\| \\
&= \hat{\lambda}_1^{-\frac 12} \left\|(\hat{C}+\hat{\lambda}_1 \operatorname{Id})^{\frac 12 } -(C_\varepsilon+\lambda_{1,\varepsilon} \operatorname{Id})^{\frac 12}\right\|  \cdot {T}_{n,WF}(C_{\varepsilon})=   o_P(1).
\end{align*}
\end{proof}

\begin{proof}[Proof of Theorem~\ref{thm_alternatives}]
It follows from  the submultiplicativity of the induced norm, that $\|A^{-1} x\| \geq \|A\|^{-1} \|x\|$.
  Thus, under the alternative  \eqref{alt_grad_local} 
    it holds with $G$ and $\theta^{\star}$ as in \eqref{theta_ast}
    \begin{align}
       & T_{n,WF}(\hat{C}_n) = 
          \max_{1 \leq k <n }\bigg\|(\hat{C}_n+\hat{\lambda}_{1,n}\operatorname{Id})^{- \frac 1 2} \frac{1}{\sqrt{n}}
          \bigg[\Delta_n\bigg(\sum_{i=1}^kg\left(\frac in\right) -\frac{k}{n} \sum_{i=1}^{n} g\left(\frac{i}{n}\right)\bigg)+\Big(\sum_{i=1}^k\varepsilon_i -\frac{k}{n} \sum_{i=1}^{n} \varepsilon_i\Big)\bigg]
          \bigg\|\notag \\
        &\ge \big\|(\hat{C}_n+\hat{\lambda}_{1,n}\operatorname{Id})^{\frac 1 2} \big\|^{-1}  \bigg[\bigg\|\sqrt{n}\Delta_n\bigg(\sum_{i=1}^{\lfloor n \theta^{\star}\rfloor}g\!\left(\frac in\right)\frac{1}{n} -\frac{\lfloor n \theta^{\star}\rfloor}{n} \sum_{i=1}^{n} g\!\left(\frac{i}{n}\right)\frac{1}{n}\bigg)\bigg\|-\max_{1\le k\le n}\bigg\|\frac{1}{\sqrt{n}}\Big(\sum_{i=1}^k\varepsilon_i -\frac{k}{n} \sum_{i=1}^{n}\!\varepsilon_i\Big)\bigg\|\bigg]\notag\\
&=\big\|(\hat{C}_n+\hat{\lambda}_{1,n}\operatorname{Id})^{\frac 1 2} \big\|^{-1} \,\left[ \left\|\sqrt{n}\Delta_n\right\|\, (G(\theta^\star)+o(1))
+O_P(1)
        \right],\label{eq_proof_alt}
    \end{align}
    where the $O_P(1)$-term follows from Assumption~\ref{ass_limit_FF} and the convergence of the sum to  $G$ follows from Riemanns integrability of $g$.

It holds $\left\|(\hat{C}+\hat{\lambda}_1\operatorname{Id})^{1/2} \right\| = \sqrt{2 \hat{\lambda}_1} = \sqrt{2\|\hat{C}\|}$, such that by assumption $    \left\|(\hat{C}_n+\hat{\lambda}_{1,n}\operatorname{Id})^{1/2} \right\| \left\|\sqrt{n}\Delta_n\right\|^{-1}=o_P(1)$, which implies that $T_{n,WF}(\hat{C}_n)$ converges to $\infty$ in probability in light of \eqref{eq_proof_alt}.

\end{proof}

\subsection{Proofs of Section~\ref{sec_gradual_change}}
We first prove the following auxiliary lemma.

\begin{lemmarep} \label{thm_gc_d_D}
Let $\boldsymbol{\eta}(i)=\boldsymbol{\eta}_d(i)$ fulfill \eqref{eq_ass_gradual_scores} (for an arbitrary multivariate time series $\{\boldsymbol{\eta}\}$, that is not necessarily obtained as a projection of functional data) and let $h$ satisfy Assumption~\ref{ass_function_h1}.
Then, there exist independent standard Brownian bridges $\{B_{n,j}(t):0\le t\le 1\}$, $j=1,\ldots,d$, such that
\begin{equation}
     \sup_{0<t<1} \left\| \frac{1}{\sqrt{n}} \sum_{i=1}^n h\left(\frac{i-\lfloor nt\rfloor }{n}\right)(\boldsymbol{\eta}(i)-\bar{\boldsymbol{\eta}}_n) - \mathbf{G}_{\boldsymbol{\lambda},n}\left(t\right)  \right\|   \stackrel{\mathsf{P}}{\to} 0 \label{eq_wi_weighted_d_D}
\end{equation} 
with $\mathbf{G}_{\boldsymbol{\lambda},n}(t) = (\sqrt{\lambda_1}\,G_{n,1}(t), \ldots, \sqrt{\lambda_d}\, G_{n,d}(t))'$, $0\le t\le 1$, and 
$G_{n,p} (t) = \int_0^{1-t}  B_{n,p}(1-t-y) \ dh(y).$ 
\end{lemmarep}

\begin{proof}
The proof follows closely the univariate case in Theorem 2.1 of \cite{huvskova2000limit}.
Setting $s=1-t$ and $j=n-i+1$, \eqref{eq_wi_weighted_d_D} is equivalent to
\begin{align}\label{eq_reformulation_lemma}
  &  \sup_{0<s<1}\bigg\| \frac{1}{\sqrt{n}} \sum_{j=1}^n h\left(\frac{\lceil ns \rceil-j+1}{n}\right)(\boldsymbol{\eta}(n-j+1)-\bar{\boldsymbol{\eta}}_n) - \tilde{\mathbf{G}}_{\boldsymbol{\lambda},n}\left(s\right)  \bigg\|   \stackrel{\mathsf{P}}{\to} 0,
\end{align}
where  $\tilde{\mathbf{G}}_{\boldsymbol{\lambda},n} = (\sqrt{\lambda_1}\,\tilde G_{n,1}, \ldots, \sqrt{\lambda_d}\,\tilde G_{n,d})'$ with
$\tilde G_{n,p} (s)=G_{n,p}(1-s) = \int_0^{s} B_{n,p}(s-y) \ dh(y). $ 
Since
\begin{align*}
    \sum_{j=1}^k(\boldsymbol{\eta}(n-j+1)-\bar{\boldsymbol{\eta}}_n)=-\sum_{j=k+1}^n(\boldsymbol{\eta}(n-j+1)-\bar{\boldsymbol{\eta}}_n)=-\sum_{i=1}^{n-k}(\boldsymbol{\eta}(i)-\bar{\boldsymbol{\eta}}_n)
\end{align*}
and $B_{n,p}(s):=- \frac{1}{\sqrt{\lambda_p\,n}} (W_{n,p}(n (1-s))-(1-s)W_{n,p}(n))$ are standard Brownian bridges, it follows from \eqref{eq_ass_gradual_scores} that
\begin{align*}
    \max_{1\le k\le n}\left\|\frac{1}{\sqrt{n}}\sum_{j=1}^k(\boldsymbol{\eta}(n-j+1)-\bar{\boldsymbol{\eta}}_n)
    -\mathbf{B}_{\boldsymbol{\lambda},n}\left(\frac k n\right)
    \right\|\overset{P}{\to} 0,
\end{align*}
where $\mathbf{B}_{\boldsymbol{\lambda},n}(s)=(\sqrt{\lambda_1}\,B_{n,1}(s),\ldots,\sqrt{\lambda_d}\,B_{n,d}(s))^T.$
Consequently, by a brief calculation, recalling $h(x)=0$ for $x\le 0$,
\begin{align}
& \sup_{s\in[0,1]}\bigg\|\sum_{j=1}^n h\Big(\frac{\lceil ns \rceil-j+1}{n}\Big)(\boldsymbol{\eta}(n-j+1)-\bar{\boldsymbol{\eta}}_n)-\sum_{i=1}^{\lceil ns \rceil}\Big(h\Big(\frac{i}{n}\Big)-h\Big(\frac{i-1}{n}\Big)\Big)\sqrt{n}\,\mathbf{B}_{\boldsymbol{\lambda},n}\Big(\frac {\lceil ns \rceil -i+1}{n}\Big)\bigg\|   \label{proofB2sum}\\
=&\sup_{s\in[0,1]}\bigg\|\sum_{i=1}^{\lceil ns \rceil}\Big(h\Big(\frac{i}{n}\Big)-h\Big(\frac{i-1}{n}\Big)\Big)\bigg(\sum_{j=1}^{\lceil ns \rceil-i+1} (\boldsymbol{\eta}(n-j+1)-\bar{\boldsymbol{\eta}}_n)-\sqrt{n}\,\mathbf{B}_{\boldsymbol{\lambda},n}\Big(\frac {\lceil ns \rceil -i+1}{n}\Big)\bigg)\bigg\|\nonumber  \\
\leq& \max_{1\le k\le n}\sum_{i=1}^{k}\Big|h\Big(\frac{i}{n}\Big)-h\Big(\frac{i-1}{n}\Big)\Big|\max_{1\le k\le n}\left\|\sum_{j=1}^k(\boldsymbol{\eta}(n-j+1)-\bar{\boldsymbol{\eta}}_n)    -\sqrt{n}\,\mathbf{B}_{\boldsymbol{\lambda},n}\left(\frac k n\right)\right\|=o_P\left(\sqrt{n}\right) \nonumber 
\end{align}
as $\sum_{i=1}^{k}\big|h(i/n)-h((i-1)/n)\big|\leq \operatorname{T\!V}(h)<\infty$ (where $\operatorname{T\!V}(h)$ denotes the total variation of $h$) by Assumption~\ref{ass_function_h1}.
We first study the summands in \eqref{proofB2sum} form $i=1$ to $\lfloor ns \rfloor$, and will consider the possible remaining summand for $i=\lceil ns \rceil$ later.
\begin{align*}
 &   \sum_{i=1}^{\lfloor n s\rfloor}\left(h\left(\frac{i}{n}\right)-h\left(\frac{i-1}{n}\right)\right)\sqrt{n}\,B_{n,p}\left(\frac {\lceil n s \rceil -i+1}{n}\right)
 =\sum_{i=1}^{\lfloor ns \rfloor}\int_{(i-1)/n}^{i/n}\sqrt{n}\,B_{n,p}\left(\frac {\lceil n s \rceil -i+1}{n}\right)\,dh(y)
 \\
& =\sum_{i=1}^{\lfloor ns \rfloor}\int_{(i-1)/n}^{i/n}\sqrt{n}\,B_{n,p}(s-y)\,dh(y)
 +\sum_{i=1}^{\lfloor ns\rfloor}\int_{(i-1)/n}^{i/n}\sqrt{n}\,\Big(B_{n,p}\Big(\frac {\lceil n s \rceil -i+1}{n}\Big)-B_{n,p}(s-y)\Big)\,dh(y) \\
&=\sqrt{n}\int_0^s\,B_{n,p}(s-y)\,dh(y)
+\int_{\lfloor ns\rfloor/n}^s \sqrt{n}\left(B_{n,p}(s-y)-B_{n,p}(0)\right)\,dh(y)\\
&\qquad +O(1)\, \left(\sqrt{n}\,\sup_{0\le h\le 3/n}\sup_{0\le x\le 1-h}|B_{n,p}(x+h)-B_{n,p}(x)|\right)\,
\int_0^{\lfloor n s\rfloor/n}\,\,dh(y)
\\
&=\sqrt{n}\,\int_0^s\,B_{n,p}(s-y)\,dh(y) +o_P(\sqrt{n}),
\end{align*}
where the last line follows  from $|\int_0^{\lfloor n s\rfloor/n}\,\,dh(y)|\le \operatorname{T\!V}(h)<\infty$ as well as
Theorem 1.4.1 in \cite{csorgo1981strong}, which implies \\$\sqrt{n}\,\sup_{0\le h\le 3/n}\sup_{0\le x\le 1-h}|B_{n,p}(x+h)-B_{n,p}(x)|=O_P(\sqrt{\log n})=o_P(\sqrt{n})$ and thus also
\begin{align*}
    &\int_{\lfloor ns\rfloor/n}^s \sqrt{n}\left(B_{n,p}(s-y)-B_{n,p}(0)\right)\,dh(y)=\operatorname{T\!V}(h)\,O_P\left(\sqrt{\log n}\right)=o_P(\sqrt{n})\\
    &\text{and }\left(h\left(\frac{\lceil ns\rceil}{n}\right)-h\left(\frac{\lceil ns\rceil-1}{n}\right)\right)\sqrt{n}\,\left(B_{n,p}\left(\frac {1}{n}\right)-B_{n,p}(0)\right) =\operatorname{T\!V}(h)\,O_P\left(\sqrt{\log n}\right)=o_P(\sqrt{n}).
\end{align*}
The latter expression corresponds to the summand for $i=\lceil ns \rceil$ in \eqref{proofB2sum}, thus completing the proof of \eqref{eq_reformulation_lemma}.
\end{proof}

\begin{proof}[Proof of Theorem~\ref{thm_gc_functional}]
As there is no change  we have $X_i - \bar{X}_n = \varepsilon_i -\bar{\varepsilon}_n= \sum_{p=1}^\infty \langle \varepsilon_i - \bar{\varepsilon}_n, v_{p,\varepsilon} \rangle\, v_{p,\varepsilon}$ with $v_{p,\varepsilon}$ as in Assumption~\ref{ass_covop}. For arbitrary $L \in \N$ define 
\begin{align*}
    X_i^L &=  \sum_{p=1}^L \langle \varepsilon_i, v_{p,\varepsilon} \rangle\, v_{p,\varepsilon}, \qquad
     \tilde{X}_i^L = \sum_{p=L+1}^\infty \langle \varepsilon_i, v_{p,\varepsilon} \rangle\, v_{p,\varepsilon}.
\end{align*}
By Parseval's identity, similarly as in Section~\ref{section_comparison}, in combination with Lemma~\ref{thm_gc_d_D} it holds as $n\to\infty$
\begin{equation}
 T_n^L :=  \max_{1 \leq k < n}\left\|\frac{1}{\sqrt{n}} \sum_{i=1}^n h\left(\frac{i-k}{n}\right)(X^L_i -\bar{X}^L_n) \right\|  \dto \sup_{0<t<1} \sqrt{\sum_{p=1}^L\lambda_{p,\varepsilon}G^2_p(t) }.  \label{eq_fd}
\end{equation}
The distribution on the right hand side of \eqref{eq_fd} is arbitrarily close  (in probability) for $L$ large enough to the limit distribution in \eqref{gc_functional_limit}.
Indeed, with Markov's inequality and the monotone convergence theorem and because $\{G_p\}$ are identically distributed, it holds for any $\zeta>0$ 
\begin{align*}
    \mathsf{P}\left(\sup_{0<t<1} \sqrt{\sum_{p=L+1}^\infty \lambda_{p,\varepsilon} G^2_p(t)} > \zeta \right) &\leq \frac{1}{\zeta^2} \sum_{p=L+1}^\infty \lambda_{p,\varepsilon}\, \mathsf{E}\left[ \sup_{0<t<1} G^2_p(t) \right] =\frac{1}{\zeta^2}\, \mathsf{E}\left[ \sup_{0<t<1} G^2_1(t)\right]\sum_{p=L+1}^\infty \lambda_{p,\varepsilon},
\end{align*}
which becomes arbitrarily small for $L$ large enough due to $\sum_{p\ge 1}\lambda_{p,\varepsilon}<\infty$.  Furthermore, the expectation can be bounded due to Assumption~\ref{ass_function_h1} as
\begin{align*}
    &\mathsf{E} \Big[\sup_{0<t<1} G^2_1(t)\Big] = \mathsf{E} \Big[\sup_{0<t<1} \Big(\int_0^{1-t}  B_1(1-t-y) \, dh(y)\Big)^2\Big] \leq\mathsf{E} \Big[ \operatorname{T\!V}^2(h) \sup_{0<x<1} |B_1(x)|^2\Big] \\
  &  = \operatorname{T\!V}^2(h)\,\mathsf{E} \Big[ \sup_{0<x<1} |B_1(x)|^2\Big]< \infty. 
\end{align*}
Next, we prove that $T_{n}^L$ is arbitrarily close to $T_{n,\operatorname{FF}}$ (in probability) uniformly in $n$ for $L$ large enough, i.e.,
 we want to show that for any $\tau,\eta > 0$, there exists $L_0(\tau,\eta)$ such that for all $n$
 \begin{equation}
     \mathsf{P}\left( \max_{1 \leq k < n} \left\|\frac{1}{\sqrt{n}} \sum_{i=1}^n h\left(\frac{i-k}{n}\right)(\tilde{X}^L_i -\bar{\tilde{X}}^L_n) \right\| > \tau   \right) \leq \eta  \label{eq_tail_pconv}
 \end{equation}
 for all $L\ge L_0(\tau,\eta)$, where $L_0(\tau,\eta)$ does not depend on $n$. Using again the Markov inequality
\begin{align}
   & \mathsf{P}\left( \max_{1 \leq k < n} \left\|\frac{1}{\sqrt{n}} \sum_{i=1}^n h\left(\frac{i-k}{n}\right)(\tilde{X}^L_i -\bar{\tilde{X}}^L_n) \right\| > \tau   \right) \leq \frac{1}{\tau^2} \,\E\max_{1 \leq k < n} \left\|\frac{1}{\sqrt{n}} \sum_{i=1}^n h\left(\frac{i-k}{n}\right)(\tilde{X}^L_i -\bar{\tilde{X}}^L_n) \right\|^2  \nonumber\\
    &\le\frac{1}{\tau^2} \sum_{p=L+1}^\infty \lambda_{p,\varepsilon}\, \E \max_{1 \leq k < n} \left(\frac{1}{\sqrt{n}} \sum_{i=1}^n h \left(\frac{i - k}{n}\right)\frac{\langle \varepsilon_i - \bar{\varepsilon}_n, v_{p,\varepsilon}\rangle}{\sqrt{\lambda_{p,\varepsilon}}}\right)^2.\label{eq_tau_odhad2}
\end{align}
We will study the expectations on the right hand side first. To this end, denote $\xi_{i,p} = \lambda_{p,\varepsilon}^{-\frac{1}{2}}\,\langle \varepsilon_i, v_{p,\varepsilon}\rangle$.  Then, setting $j=n-i+1$ and $l=n-k$, by a brief calculation using $h(x)=0$ for all $x\le 0$, 
\begin{align}
    &\max_{1 \leq k < n} \left(\frac{1}{\sqrt{n}} \sum_{i=1}^n h \left(\frac{i - k}{n}\right)(\xi_{i,p} - \bar{\xi}_{p,n})\right)^2
    = \max_{1 \leq l < n} \left(\frac{1}{\sqrt{n}} \sum_{j=1}^n h \left(\frac{l-j+1}{n}\right)(\xi_{n-j+1,p} - \bar{\xi}_{p,n})\right)^2\notag\\
    &\le 2 \max_{1 \leq l < n} \left(\frac{1}{\sqrt{n}} \sum_{j=1}^n h \left(\frac{l-j+1}{n}\right)\xi_{n-j+1,p} \right)^2
    + 2 \,\left(\bar{\xi}_{p,n}\right)^2\, \max_{1 \leq l < n} \left(\frac{1}{\sqrt{n}} \sum_{j=1}^n h \left(\frac{l-j+1}{n}\right)\right)^2
    \notag\\
    &\le 2\,\max_{1\le l<n}\left( \sum_{i=1}^l \frac{1}{\sqrt{n}}\sum_{j=1}^n\left(h\left(\frac{i-j+1}{n}\right)-h\left(\frac{i-j}{n}\right)\right)\, \xi_{n-j+1,p} 
    \right)^2 +2\,\sup_{0\le x\le 1}|h(x)|\, \left(\sqrt{n}\,\bar{\xi}_{p,n}\right)^2.\label{eq_th32_n1}
\end{align}
For the second summand, we can bound the expectation by
\begin{equation}\label{eq_thm_grad_secondterm}
E\left[2\,\sup_{0\le x\le 1}|h(x)|\, \left(\sqrt{n}\,\bar{\xi}_{p,n}\right)^2\right]\leq 2\,\sup_{0\le x\le 1}|h(x)|\frac{1}{n}\sum_{i=1}^n\sum_{i=1}^n\big|\cov(\xi_{i,p},\xi_{j,p})\big|\leq 2\operatorname{T\!V(h)}\sum_{h\in\Z}|\gamma_p(h)|.
\end{equation}
 with $\gamma_p(i)=\cov(\xi_{0,p},\xi_{i,p})$, because $\sup_{0\le x\le 1}|h(x)|\leq \operatorname{T\!V(h)}<\infty$.

It remains to upper bound the expectation of the first term in \eqref{eq_th32_n1}. To this end, define
\begin{equation*}
    Z_{i,p} = \frac{1}{\sqrt{n}} \sum_{j=1}^n \left(h\left(\frac{i-j+1}{n}\right) - h\left(\frac{i-j}{n}\right)\right) \xi_{n-j+1,p},\end{equation*}
and define the function $h_{\operatorname{T\!V}}:[-1,1]\rightarrow\R$ by $h_{\operatorname{T\!V}}(x)=\operatorname{T\!V}(h_{|[-1,x]})$ (the total variation of $h$ restricted to the interval $[-1,x]$). 
Clearly, for $-1\le x\le y\le 1$, it holds $h_{\operatorname{T\!V}}(x)+|h(y)-h(x)|\le h_{\operatorname{T\!V}}(y)$ and $h_{\operatorname{T\!V}}(1)=\operatorname{T\!V}(h)<\infty$. In particular, $h_{\operatorname{T\!V}}$ is indeed finite and non-decreasing.
For $a,b \in \N$, $a<b$, we have, with $\alpha$ and $\beta$ in Assumption~\ref{ass_function_h1},
\begin{align*}
    &\mathsf{E} \left(\sum_{i=a}^b Z_{i,p}\right)^2 = \mathsf{E} \left(\frac{1}{\sqrt{n}} \sum_{j=1}^n \left(h\left(\frac{b-j+1}{n}\right) - h\left(\frac{a-j}{n}\right)\right) \xi_{n-j+1,p}\right)^2 \\
    &= \frac{1}{n} \sum_{i,j=1}^n \left(h\left(\frac{b-i+1}{n}\right) - h\left(\frac{a-i}{n}\right)\right) \left(h\left(\frac{b-j+1}{n}\right) - h\left(\frac{a-j}{n}\right)\right) \mathsf{E} \xi_{n-i+1,p}\,\xi_{n-j+1,p} \\
    &\leq \beta \left(\frac{b-a+1}{n}\right)^{\alpha} \frac{1}{n}\sum_{j=1}^n \Big|h\Big(\frac{b-j+1}{n}\Big) - h\Big(\frac{a-j}{n}\Big)\Big|\sum_{i=1}^n |\gamma_p(j-i)|\\
    &\leq \beta \left(\frac{b-a+1}{n}\right)^{\alpha} \frac{1}{n}\sum_{j=1}^n \Big(h_{\operatorname{T\!V}}\Big(\frac{b-j+1}{n}\Big) - h_{\operatorname{T\!V}}\Big(\frac{a-j}{n}\Big)\Big)\sum_{h\in\Z}|\gamma_p(h)|\\
    &\leq \beta \operatorname{T\!V}(h) \left(\frac{b-a+1}{n}\right)^{1+\alpha} \, \sum_{h\in\Z}|\gamma_p(h)|,
\end{align*}
because, using that $h(x)=0$ and consequently $h_{\operatorname{T\!V}}(x)=0$ for all $x\le 0$,
\begin{align*}
&\sum_{j=1}^n \Big(h_{\operatorname{T\!V}}\left(\frac{b-j+1}{n}\right) - h_{\operatorname{T\!V}} \left(\frac{a-j}{n}\right)\Big)
    =\sum_{j=1}^b h_{\operatorname{T\!V}} \left(\frac{b-j+1}{n}\right)-\sum_{j=1}^{a-1}h_{\operatorname{T\!V}}\left(\frac{a-j}{n}\right)\\
 &   =\sum_{i=1}^bh_{\operatorname{T\!V}}\left(\frac i n \right)-\sum_{i=1}^{a-1}h_{\operatorname{T\!V}}\left(\frac i n\right)= \sum_{i=a}^{b} h_{\operatorname{T\!V}}\left(\frac in \right) \leq \operatorname{T\!V}(h) \,(b-a+1). 
\end{align*}
By \cite{moricz1976moment}, Theorem~1, there exists a constant $D$ depending only on the exponents (in our case 2 and 1+$\alpha$) such that
\begin{equation*}
\mathsf{E}\left(\max_{1 \leq l \leq n } \left|\sum_{i=1}^l Z_{i,p}\right|\right)^2 \leq D \cdot \beta \cdot \operatorname{T\!V}(h) \cdot \sum_{h\in\Z}|\gamma_p(h)|.
\end{equation*}
Together with \eqref{eq_th32_n1} and \eqref{eq_thm_grad_secondterm} this shows that the right hand side of \eqref{eq_tau_odhad2} is bounded 
by
\begin{align*}
&\frac{1}{\tau^2} \sum_{p=L+1}^\infty \lambda_{p,\varepsilon}\, \E \max_{1 \leq k < n} \left(\frac{1}{\sqrt{n}} \sum_{i=1}^n h \left(\frac{i - k}{n}\right)\frac{\langle \varepsilon_i - \bar{\varepsilon}_n, v_{p,\varepsilon}\rangle}{\sqrt{\lambda_{p,\varepsilon}}}\right)^2\\
\leq&\frac{1}{\tau^2} \sum_{p=L+1}^\infty \lambda_{p,\varepsilon}(D \cdot \beta+2) \cdot \operatorname{T\!V}(h) \cdot \sum_{h\in\Z}|\gamma_p(h)|=\frac{(D \cdot \beta+2) \cdot \operatorname{T\!V}(h)}{\tau^2} \sum_{p=L+1}^\infty \sum_{h\in\Z}\big|\cov(\langle \varepsilon_0, v_{p,\varepsilon} \rangle,\langle \varepsilon_h, v_{p,\varepsilon} \rangle)\big|.
\end{align*}
By our Assumption \ref{ass_covop}, we have $\sum_{p=1}^\infty \sum_{h\in\Z}|\cov(\langle \varepsilon_0, v_{p,\varepsilon} \rangle,\langle \varepsilon_h, v_{p,\varepsilon} \rangle)|<\infty$, so this bound can be made arbitrarily small by choosing $L$ large enough, completing the proof of \eqref{eq_tail_pconv}. Combining the arguments with Theorem 4.1 of \cite{billingsley1968} yields assertion (a).
The extension for (b) can be done in a similar manner as in the proof of Theorem~\ref{thm_asymptotics_h0}: First, analogously to (a),
\begin{equation*}
T_{n,\operatorname{WF}}(h,C_\varepsilon)\dto \sup_{0<t<1} \sqrt{\sum_{p=1}^\infty\frac{\lambda_{p,\varepsilon} }{\lambda_{1,\varepsilon} +\lambda_{p,\varepsilon} }G_p^2(t)}.
\end{equation*}
Now, we show that the difference of $T_{n,\operatorname{WF}}(h,C_\varepsilon)$ and $T_{n,\operatorname{WF}}(h,\hat{C})$ is negligible. Indeed,
\begin{align*}
&\left|T_{n,\operatorname{WF}}(h,C_\varepsilon)-T_{n,\operatorname{WF}}(h,\hat{C})\right|\leq  \max_{1\leq k< n} \left\|\left((\hat{C}+\hat{\lambda}_1 \operatorname{Id})^{- 1/2 }-(C_\varepsilon+\lambda_{1,\varepsilon} \operatorname{Id})^{- 1/2} \right)\frac{1}{\sqrt{n}}\sum_{i=1}^n(X_i-\bar{X}) \right\| \\
=&  \max_{1\leq k< n} \ \left\|(\hat{C}+\hat{\lambda}_1 \operatorname{Id})^{- 1/2  }\left((C_\varepsilon+\lambda_{1,\varepsilon} \operatorname{Id})^{ 1/2 }-(\hat{C}+\hat{\lambda}_1 \operatorname{Id})^{ 1/2 } \right)(C_\varepsilon+\lambda_{1,\varepsilon} \operatorname{Id})^{- 1/2 }\frac{1}{\sqrt{n}}\sum_{i=1}^n(X_i-\bar{X}) \right\| \\
\leq &\|(\hat{C}+\hat{\lambda}_1 \operatorname{Id})^{- 1/2  }\| \left\|(\hat{C}+\hat{\lambda}_1 \operatorname{Id})^{ 1/2 } -(C_\varepsilon+\lambda_{1,\varepsilon} \operatorname{Id})^{ 1/2 }\right\|  \max_{1\leq k< n} \Big\|(C_\varepsilon+\lambda_{1,\varepsilon} \operatorname{Id})^{- 1/2 }\frac{1}{\sqrt{n}}\sum_{i=1}^n(X_i-\bar{X})\Big\| \\
=& \hat{\lambda}_1^{-1/2} \left\|(\hat{C}+\hat{\lambda}_1 \operatorname{Id})^{ 1/2 } -(C_\varepsilon+\lambda_{1,\varepsilon} \operatorname{Id})^{1/2}\right\|  \cdot {T}_{n,\operatorname{WF}}(C_{\varepsilon})=   o_P(1),
\end{align*}
where we used \eqref{eq_value_inversesquarerooteigenvalue} and
\eqref{eq_consistency_cov_square_roots} from the proof of Theorem~\ref{thm_asymptotics_h0}. This completes the proof of (b).
\end{proof}

\begin{proof}[Proof of Theorem~\ref{th_gradual_PCA}]
By the Cauchy-Schwarz inequality it holds for any $p=1,\ldots,d$
\begin{align}
 &  \max_{1\le k<n}\left| \frac{1}{\sqrt{n}} \sum_{i=1}^n h\left(\frac{i-k}{n}\right) \langle \varepsilon_i - \bar{\varepsilon}_n, s_p\,\hat v_{p,\varepsilon} \rangle
   -
 \frac{1}{\sqrt{n}} \sum_{i=1}^n h\left(\frac{i-k}{n}\right) \langle \varepsilon_i - \bar{\varepsilon}_n, v_{p,\varepsilon} \rangle
   \right|\notag\\
   &= \max_{1\le k<n}\left|\left\langle  \frac{1}{\sqrt{n}} \sum_{i=1}^n h\left(\frac{i-k}{n}\right)\left( \varepsilon_i - \bar{\varepsilon}_n\right) \,,\, s_p\,\hat v_{p,\varepsilon} - v_{p,\varepsilon} \right\rangle
   \right|\notag\\
   &\le 
   \left\| s_p\,\hat v_{p,\varepsilon} - v_{p,\varepsilon}\right\|\cdot
   \max_{1\le k<n}\left\|\frac{1}{\sqrt{n}} \sum_{i=1}^n h\left(\frac{i-k}{n}\right)\left( \varepsilon_i - \bar{\varepsilon}_n\right)\right\|
   =o_P(1)\cdot O_P(1)=o_P(1)\label{eq:replace_DR}
\end{align}
by \eqref{eq_ass_est_grad} and Theorem~\ref{thm_gc_functional} (a), noting that the second term equals the fully functional statistics under the null hypothesis.
Consequently, the limit distribution of the dimension reduced statistic based on the estimated eigenfunctions $\hat v_{p,\varepsilon}$  equals the limit distribution of the dimension reduced statistics based on the true eigenfunctions $v_{p,\varepsilon}$.  The result then follows from an application of Proposition~\ref{thm_wi_d_D} and Lemma~\ref{thm_gc_d_D}, in combination with the consistency of the estimated eigenvalues guaranteed by \eqref{eq_ass_est_grad}. 
\end{proof}

 \begin{proof}[Proof of Proposition \ref{prop_alt_grad}]
     By assumption, there exists $0<H\le 1$ such that $h(x)> 0$ or $h(x)<0$ for all $0<x<H$, where we consider $h(x)>0$ w.l.o.g.
     Because $g$ is non-constant by assumption, neither is $f(x)=g(x)-\int_0^1g(x)\,dx$. Define $C=\sup(0< x< 1:f(x)\neq 0)$. Then, by the piecewise continuity of $g$, there exists $0\le c<C$ such that  $f(x)> 0$ for all $x\in(c,C)$ or $f(x)<0$ for all $x\in(c,C)$, where we consider the case $f(x)>0$ w.l.o.g. Then, for $t^\star=\max(c,C-H)$, it holds $h(x-t^{\star})f(x)>0$ for all $x\in (t^{\star},C)$ and $h(x-t^{\star})f(x)=0$ for $x\le t^\star$ as well as for $x>C$. Consequently,
\begin{align*}
    \max_{0\leq t\leq1}\left|\int_0^1 h(x-t)\, g(x) \, dx - \int_0^1g(x) \,dx  \int_0^1 h(x-t)\, dx \right| 
    \ge \left| \int_0^1h(x-t^\star)\, f(x)\,dx
    \right|>0.
\end{align*}
 \end{proof}

\begin{proof}[Proof of Theorem~\ref{thm_gradual_alternative}]
	By \eqref{eq:detect}, there exists $t^\star\in(0,1)$, such that 
\begin{equation*}
\left|\int_0^1 h(x-t^\star) g(x) \ dx - \int_0^1g(x) dx  \int_0^1 h(x-t^\star) dx \right|>0,
\end{equation*} 
and by the continuity of $h$, there exists some $c>0$ and $n_0\in\N$, such that for all $n\geq n_0$ and $k^\star=\lfloor nt^\star\rfloor$
\begin{equation*}
\left|\frac{1}{n}\sum_{i=1}^nh\Big(\frac{i-k^\star}{n}\Big)g\Big(\frac in\Big) -\frac{1}{n} \sum_{i=1}^nh\Big(\frac{i-k^\star}{n}\Big) \frac{1}{n}\sum_{j=1}^ng\Big(\frac jn\Big)\right|\geq c.
\end{equation*}
Consequently, for all $n\geq n_0$, we have that
    \begin{align}\label{eq_power_FF_1}
      &T_{n,\operatorname{FF}}(h) \notag\\
       \geq& \bigg\| \frac{1}{\sqrt{n}}
          \bigg[\Delta_n\bigg(\sum_{i=1}^nh\Big(\frac{i-k^\star}{n}\Big)g\Big(\frac in\Big) - \sum_{i=1}^nh\Big(\frac{i-k^\star}{n}\Big) \frac{1}{n}\sum_{j=1}^ng\Big(\frac jn\Big) \bigg)+\sum_{i=1}^nh\Big(\frac{i-k^\star}{n}\Big)\big(\varepsilon_i-\bar{\varepsilon}_n\big)\bigg]
          \bigg\|\notag\\
        \geq& \bigg\| \frac{1}{\sqrt{n}}
         \Delta_n\bigg(\sum_{i=1}^nh\Big(\frac{i-k^\star}{n}\Big)g\Big(\frac in\Big) - \sum_{i=1}^nh\Big(\frac{i-k^\star}{n}\Big) \frac{1}{n}\sum_{j=1}^ng\Big(\frac jn\Big) \bigg)  \bigg\|-\bigg\| \frac{1}{\sqrt{n}}\sum_{i=1}^nh\Big(\frac{i-k^\star}{n}\Big)\big(\varepsilon_i-\bar{\varepsilon}_n\big)
          \bigg\|\notag\\
\geq & c\sqrt{n} \left\|\Delta_n\right\|\, +O_P(1),
    \end{align}
    where the $O_P(1)$-term follows from Theorem \ref{thm_gc_functional}. This completes the proof for $T_{n,\operatorname{FF}}(h)$. 
   By  the submultiplicativity of the induced norm, it holds $\|A^{-1} x\| \geq \|A\|^{-1} \|x\|$. Consequently, 
    \begin{align*}
      &T_{n,\operatorname{WF}}(h,\hat{C}_n) 
       \geq \left\|(\hat{C}_n+\hat{\lambda}_{1,n}\operatorname{Id})^{1/2} \right\|^{-1} \,
       T_{n,\operatorname{FF}}(h)
       = \left(2\|\hat{C}\|\right)^{-1/2}\,
       \left(c\sqrt{n} \left\|\Delta_n\right\|\, +O_P(1)
       \right),
    \end{align*}
    completing the proof for $T_{n,\operatorname{WF}}(h,\hat{C}_n)$ by assumption.
\end{proof}

\begin{proof}[Proof of Theorem~\ref{thm_gradual_alternative_DR}]
As $\hat{\lambda}_p\le \|\hat C\|$ for all $p=1,\ldots,d$, we can lower bound the dimension reduced statistic by
\begin{align*}
  &  T^2_{n,\operatorname{PC}}(h;\hat{C})
      \geq \frac{n \|\Delta_n\|^2}{\|\hat{C}\|} \max_{1\le k< n } \sum_{p=1}^d  \bigg(\frac{1}{n\|\Delta_n\|}\sum_{i=1}^n h\Big(\frac{i-k}{n}\Big) 
     \left(\hat{\eta}_{i,p} - {\bar{\hat \eta}}_{n,p}\right)
      \bigg)^2,
\end{align*}
so it suffices to show that the maximum on the right hand side is stochastically bounded from below.
By Theorem~\ref{thm_gc_functional} (a) and the assumption that $n\|\Delta_n\|\rightarrow \infty$,  it holds
\begin{align*}
\frac{1}{\sqrt{n}\|\Delta_n\|}\,\max_{1\le k<n}\bigg\|\frac{1}{\sqrt{n}} \sum_{i=1}^n h\left(\frac{i-k}{n}\right)(
\varepsilon_i - \bar{\varepsilon}_n)\bigg\|=o_P(1),
\end{align*}
thus, by Parseval's identity, it holds
\begin{align}\label{eq_power_DC1}
&\max_{1\le k< n } \sqrt{\sum_{p=1}^d  \bigg(\frac{1}{n\|\Delta_n\|}\sum_{i=1}^n h\Big(\frac{i-k}{n}\Big) 
     \langle X_i - \bar{X}_n, \tilde v_{p} \rangle
    \bigg)^2}
\le
\max_{1\le k<n}\bigg\|\frac{1}{n\|\Delta_n\|} \sum_{i=1}^n h\left(\frac{i-k}{n}\right)\left[ X_i-\bar{X}_n\right]\bigg\|\\
&= \max_{1\le k<n}\bigg\|\frac{1}{n\|\Delta_n\|} \sum_{i=1}^n h\left(\frac{i-k}{n}\right) \Delta_n \Big(g\Bigl( \frac i n\Bigr)-\frac{1}{n}\sum_{j=1}^ng\Bigl( \frac j n\Bigr)\Big)\bigg\|+o_P(1)=O_P(1).\notag
\end{align}
Using the bound for the second term in \eqref{eq_power_DC1}, we get, analogously to 
 \eqref{eq:replace_DR}  by \eqref{eq_ass_est_grad_alt},
\begin{align*}
 &  \max_{1\le k<n}\left| \frac{1}{n\|\Delta_n\|} \sum_{i=1}^n h\left(\frac{i-k}{n}\right) \langle X_i - \bar{X}_n, \tilde s_p\,\hat v_{p} \rangle
   -
 \frac{1}{n\|\Delta_n\|} \sum_{i=1}^n h\left(\frac{i-k}{n}\right) \langle X_i - \bar{X}_n, \tilde v_{p} \rangle
   \right|\notag\\
   &\le 
   \left\| \tilde s_p\,\hat v_{p} -\tilde v_{p}\right\|\cdot
   \max_{1\le k<n}\left\|\frac{1}{n\|\Delta_n\|} \sum_{i=1}^n h\left(\frac{i-k}{n}\right)\left[ X_i-\bar{X}_n\right]\right\|
   =o_P(1).
 \end{align*}
  Consequently, by an application of the bound  for the first term in \eqref{eq_power_DC1} and the Cauchy-Schwarz-inequality,
\begin{align*}
&\max_{1\le k< n } \sum_{p=1}^d  \bigg(\frac{1}{n\|\Delta_n\|}\sum_{i=1}^n h\Big(\frac{i-k}{n}\Big) 
     \left(\hat{\eta}_{i,p} - {\bar{\hat \eta}}_{n,p}\right)   \bigg)^2
= \max_{1\le k<n}\sum_{p=1}^d\left(\frac{1}{n\|\Delta_n\|} \sum_{i=1}^n h\left(\frac{i-k}{n}\right) \langle X_i - \bar{X}_n, \tilde v_{p} \rangle\right)^2\, +o_P(1).
\end{align*}
We will now use similar arguments to complete the proof: As in Theorem~\ref{th_gradual_PCA} it holds
\begin{align}\label{eq_theorem32_for_alt_grad_proof}
    \max_{1\le k<n}\sum_{p=1}^d\left(\frac{1}{n\|\Delta_n\|} \sum_{i=1}^n h\left(\frac{i-k}{n}\right) \langle \varepsilon_i - \bar{\varepsilon}_n, \tilde v_{p} \rangle\right)^2=O_P\left(\frac{1}{n\|\Delta_n\|^2}\right)=o_P(1).
\end{align}
Furthermore,
\begin{align*}
 &\max_{1\le k< n }  \sum_{p=1}^d\bigg(\frac{1}{n\|\Delta_n\|}\sum_{i=1}^n h\left(\frac{i-k}{n}\right) \Big(g\Bigl( \frac i n\Bigr)-\frac{1}{n}\sum_{j=1}^ng\Bigl( \frac j n\Bigr)\Big)\langle \Delta_n, \tilde{v}_p \rangle\bigg)^2\\
 &=\frac{\sum_{p=1}^d\langle \Delta_n,\tilde{v}_p\rangle^2}{\|\Delta_n\|^2}\left(\sup_{1\le t\le 1}\left|\int_0^1h(x-t)\,g(x)\,dx-\int_0^1h(x-t)\,dx\cdot\int_0^1g(x)\,dx\right|+o(1)\right)^2,
\end{align*}
which is bounded from above by Parseval's identity as well as bounded away from zero by \eqref{eq_ass_grad_alt_2}. Thus, another application of the Cauchy-Schwarz-inequality  in combination with \eqref{eq_theorem32_for_alt_grad_proof} shows that
\begin{align*}
&\max_{1\le k<n}\sum_{p=1}^d\left(\frac{1}{n\|\Delta_n\|} \sum_{i=1}^n h\left(\frac{i-k}{n}\right) \langle X_i - \bar{X}_n, \tilde v_{p} \rangle\right)^2\\
&=\max_{1\le k< n }  \sum_{p=1}^d\bigg(\frac{1}{n\|\Delta_n\|}\sum_{i=1}^n h\left(\frac{i-k}{n}\right) \Big(g\Bigl( \frac i n\Bigr)-\frac{1}{n}\sum_{j=1}^ng\Bigl( \frac j n\Bigr)\Big)\langle \Delta_n, \tilde{v}_p \rangle\bigg)^2+o_P(1),
\end{align*}
which is bounded away from zero, thus completing the proof.

\end{proof}

\bibliography{bibliography}

\end{document}